%
%
%


\documentclass[12pt]{amsart}
\allowdisplaybreaks
\usepackage[dvipsnames]{xcolor}
\usepackage{amsfonts,amsmath,latexsym,amssymb,verbatim,amsbsy,amsthm}
\usepackage{mathrsfs}
\usepackage{nicefrac}
\usepackage{graphicx}
\usepackage{nicefrac}
\usepackage{dsfont}
\usepackage{mathrsfs}
\usepackage{cancel}
\usepackage[normalem]{ulem}
\usepackage{color}
\usepackage{pifont}
\usepackage{tikz}
\usepackage{resizegather}
\usepackage{cancel}
\usetikzlibrary{arrows, positioning}

\usepackage{setspace} 
\usepackage{caption,subcaption} 

\usepackage{algorithm} 

\setcounter{algorithm}{0}
\usepackage{todonotes}
\usepackage{algpseudocode} 

\usepackage{grffile}  

\usepackage[top=1in, bottom=1in, left=1in, right=1in]{geometry}

\usepackage{relsize}

\usepackage[colorlinks=true, pdfstartview=FitV, linkcolor=RoyalBlue,citecolor=ForestGreen, urlcolor=blue]{hyperref}
\usepackage[capitalize]{cleveref}
\numberwithin{equation}{section}
\numberwithin{figure}{section}
\numberwithin{table}{section}
\usepackage{float}


\renewcommand{\geq}{\geqslant}

\renewcommand{\leq}{\leqslant}
\renewcommand{\le}{\leqslant}
 
\newcommand{\be}{\begin{equation}}
\newcommand{\ee}{\end{equation}}

\theoremstyle{plain}
\newtheorem{THEOREM}{Theorem}[section]

\newtheorem{theorem}[THEOREM]{Theorem}
\newtheorem{corollary}[THEOREM]{Corollary}

\newtheorem{lemma}[THEOREM]{Lemma}
\newtheorem{proposition}[THEOREM]{Proposition}
\newtheorem{example}[THEOREM]{Example}
\newtheorem{assumption}[THEOREM]{Assumption}

\theoremstyle{definition}

\newtheorem{definition}[THEOREM]{Definition}

\theoremstyle{remark}
\theoremstyle{question}

\newtheorem{remark}[THEOREM]{Remark}

\newcommand{\E}{\mathbb{E}}

\newcommand{\R}{\mathbb{R}}
\newcommand{\der}[2]{\frac{d #1}{d #2}}
\newcommand{\norm}[1]{\left\|#1\right\|_{L^2,P}}
\newcommand{\myrangle}{\rangle_{L^2}}
\newcommand{\mylangle}{\langle}
\usepackage{color}
\usepackage{graphicx}
\usepackage{tikz-cd}
\usepackage{tikz}
\newcommand*{\horzbar}{\rule[.5ex]{2.5ex}{0.5pt}}

\DeclareMathOperator*{\argmin}{argmin}

\begin{document}

\title{Randomised Splitting Methods and Stochastic Gradient Descent}


\author{Luke Shaw}
\address{ironArray SLU, Castell\'{o} de la Plana, Spain}
\email{luke.shaw@ironarray.io}
\thanks{}

\author{Peter A. Whalley}
\address{Seminar for Statistics, Department of Mathematics, ETH Z{\"u}rich, Switzerland}
\email{peter.whalley@math.ethz.ch}
\thanks{}

\subjclass[2010]{Primary }

\date{}

\dedicatory{}

\begin{abstract}
We explore an explicit link between stochastic gradient descent using common batching strategies and splitting methods for ordinary differential equations. From this perspective, we introduce a new minibatching strategy (called Symmetric Minibatching Strategy) for stochastic gradient optimisation which shows greatly reduced stochastic gradient bias (from $\mathcal{O}(h^2)$ to $\mathcal{O}(h^4)$ in the optimiser stepsize $h$), when combined with momentum-based optimisers. We justify why momentum is needed to obtain the improved performance using the theory of backward analysis for splitting integrators and provide a detailed analytic computation of the stochastic gradient bias on a simple example.

Further, we provide improved convergence guarantees for this new minibatching strategy using Lyapunov techniques that show reduced stochastic gradient bias for a fixed stepsize (or learning rate) over the class of strongly-convex and smooth objective functions. Via the same techniques we also improve the known results for the Random Reshuffling strategy for stochastic gradient descent methods with momentum. We argue that this also leads to a faster convergence rate when considering a decreasing stepsize schedule. Both the reduced bias and efficacy of decreasing stepsizes are demonstrated numerically on several motivating examples.
\end{abstract}

\maketitle

\section{Introduction}
In machine learning, one common approach to training a model is to frame the task as an optimisation problem and seek the optimum of an objective function \( F: \mathbb{R}^d \to \mathbb{R} \) using an optimisation algorithm, in the particular case where $F$ corresponds to a finite-sum problem with gradient
\begin{equation}\label{eq:FiniteSum} \nabla F=\frac{1}{N}\sum_{i=1}^{N} \nabla f_i,\end{equation}
where $f_{i}:\mathbb{R}^{d}\to\mathbb{R}$ for $i = 1,...,N$.

In this setting the goal is to find the minimiser of $F$ (assuming there is a unique minimum),
\begin{equation}
X_*=\argmin_{X\in \mathbb{R}^{d}} F(X).
\end{equation}
It is well known that the gradient flow on $\mathbb{R}^{d}$, where $X(0) = x_{0}$,
\begin{equation}\label{eq:GD_cont}
    \frac{dX}{dt} = -\nabla F(X)\equiv G(X),
\end{equation}
converges to the minimiser $X_*$ as $t\to\infty$ under appropriate assumptions on $F$. The Gradient Descent (GD) algorithm, which, given a stepsize $h>0$ and an initial value $x_0\in \mathbb{R}^{d}$, follows the update rule
\begin{equation}\label{eq:GD}\tag{GD}
    x_{k+1}= x_k-h\nabla F(x_k),
\end{equation}
 can be seen as the Euler discretisation of the continuous dynamics \cref{eq:GD_cont}. Under appropriate assumptions on $h$ and $F$
\cref{eq:GD} also converges to the minimiser $X_* \in \mathbb{R}^{d}$ as $k\to \infty$ (see \cite{NesterovBook}). In particular, under the following assumptions on the objective function, $F$, one can easily achieve quantitative convergence guarantees.

\begin{definition}\label{def:Lsmooth}
A continuously differentiable $F:\mathbb{R}^{d} \to \mathbb{R}$ is $L$-smooth if  for all $x,y\in \mathbb{R}^{d}$  with $L>0$,
$$
\lVert \nabla F(x) - \nabla F(y)\rVert \le L \lVert x-y\rVert.
$$
\end{definition}
\begin{definition}\label{def:muconvex}
A continuously differentiable $F:\mathbb{R}^{d} \to \mathbb{R}$ is $\mu$-strongly convex if for all $x,y \in \mathbb{R}^{d}$ with $\mu>0$,
$$
F(x) \geq F(y)+\langle \nabla F(y),x-y\rangle + \frac{\mu}{2} \lVert x-y\rVert^2.
$$
Further, $F$ is (merely) convex if it is $0$-strongly convex.
\end{definition}

\begin{assumption}[$\mu$-strongly convex and $L$-smooth]\label{assum:smoothness}$F$ is continuously differentiable and there exists $L > \mu > 0$ such that $F$ is $\mu$-strongly convex and $L$-smooth.
\end{assumption}

\begin{theorem}\label{thm:GD}
    Let $F$ satisfy \cref{assum:smoothness} with minimiser $X_{*} \in \mathbb{R}^{d}$. For an initialisation $x_{0} \in \mathbb{R}^{d}$, stepsize $0<h<1/L$ and $K\in\mathbb{N}$, the iterates of \cref{eq:GD} satisfy
    \[
    \|x_{K}-X_{*}\|^{2}_{2} \leq (1-h\mu)^{K}\|x_{0}-X_{*}\|^{2}_{2}.
    \]
    \begin{proof}
        See \cite[Theorem 3.6]{Garrigos2023} for example.
    \end{proof}
\end{theorem}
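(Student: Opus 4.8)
The plan is to prove a one-step linear contraction $\|x_{k+1}-X_{*}\|_{2}^{2}\le(1-h\mu)\|x_{k}-X_{*}\|_{2}^{2}$ and then iterate it $K$ times. First I would expand the squared distance along the update rule \cref{eq:GD}, using the first-order optimality condition $\nabla F(X_{*})=0$:
\[
\|x_{k+1}-X_{*}\|_{2}^{2}=\|x_{k}-X_{*}\|_{2}^{2}-2h\langle\nabla F(x_{k}),x_{k}-X_{*}\rangle+h^{2}\|\nabla F(x_{k})\|_{2}^{2}.
\]

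Next I would control the two $h$-dependent terms using \cref{assum:smoothness}. For the cross term, $\mu$-strong convexity (\cref{def:muconvex}) applied with the roles $x=X_{*}$, $y=x_{k}$ gives
\[
\langle\nabla F(x_{k}),x_{k}-X_{*}\rangle\ge F(x_{k})-F(X_{*})+\frac{\mu}{2}\|x_{k}-X_{*}\|_{2}^{2}.
\]
For the quadratic term I would invoke the standard consequence of $L$-smoothness: the descent lemma $F(y)\le F(x_{k})+\langle\nabla F(x_{k}),y-x_{k}\rangle+\frac{L}{2}\|y-x_{k}\|_{2}^{2}$, minimised over $y$ (equivalently evaluated at the virtual step $y=x_{k}-\tfrac1L\nabla F(x_{k})$) and combined with $F(X_{*})\le F(y)$, yields $\|\nabla F(x_{k})\|_{2}^{2}\le 2L\,(F(x_{k})-F(X_{*}))$.

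Substituting both bounds gives
\[
\|x_{k+1}-X_{*}\|_{2}^{2}\le(1-h\mu)\|x_{k}-X_{*}\|_{2}^{2}-2h(1-hL)\,(F(x_{k})-F(X_{*})),
\]
and since $0<h<1/L$ we have $1-hL>0$ while $F(x_{k})-F(X_{*})\ge 0$, so the final term is non-positive and the one-step contraction follows. A routine induction on $k$ from $0$ to $K$ then delivers $\|x_{K}-X_{*}\|_{2}^{2}\le(1-h\mu)^{K}\|x_{0}-X_{*}\|_{2}^{2}$.

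I expect the only mildly non-routine ingredient is recording the smoothness inequality $\|\nabla F(x)\|_{2}^{2}\le 2L(F(x)-F(X_{*}))$; everything else is direct substitution, and the hypothesis $h<1/L$ is precisely what is needed to absorb the leftover function-value term. As an alternative one could bound $\|x_{k+1}-X_{*}\|_{2}=\|(x_{k}-h\nabla F(x_{k}))-(X_{*}-h\nabla F(X_{*}))\|_{2}$ directly via co-coercivity/strong monotonicity of $\nabla F$, which in fact produces the sharper factor $1-\tfrac{2h\mu L}{\mu+L}$ under $h\le 2/(\mu+L)$; but since $h<1/L<2/(\mu+L)$ this only strengthens the claim, and the function-value route above matches the stated bound most transparently.
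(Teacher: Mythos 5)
Your argument is correct, and it is exactly the standard proof that the cited reference (Garrigos and Gower, Theorem~3.6) gives: expand $\|x_{k+1}-X_*\|_2^2$ along the GD update, lower-bound the cross term via strong convexity, upper-bound the quadratic term via the smoothness inequality $\|\nabla F(x)\|_2^2\le 2L(F(x)-F(X_*))$, and observe that the residual $-2h(1-hL)(F(x_k)-F(X_*))$ is non-positive for $h<1/L$. The paper itself supplies no argument beyond the citation, so there is nothing to contrast; your co-coercivity aside is also accurate and does yield the sharper rate $1-\tfrac{2h\mu L}{\mu+L}$ on the larger range $h\le 2/(\mu+L)$.
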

In particular, for $h = 1/L$, gradient descent converges to the minimiser with a rate of $\mu/L$, which is known as the condition number of the objective $F$. 

Gradient descent is the simplest optimisation algorithm that one can use. Depending on the context, its limitations may be overcome in different ways, which we review in this introduction. If the gradient is too expensive to calculate, one may use a stochastic gradient descent algorithm, which we discuss in \cref{sec:StochGrad}; one may also introduce momentum, which has different benefits in the full versus stochastic gradient case, as discussed in \cref{sec:AccConv}; finally, we propose that the combination of stochastic gradient and momentum may be most effectively understood via the literature on \emph{splitting methods}, see \cref{sec:splitting_methods_intro}.

\subsection{Stochastic Gradients}\label{sec:StochGrad}
When performing optimisation within the statistics and machine learning context, where models increasingly rely on large datasets, evaluating the gradient $\nabla F$ requires computations over the entire dataset. It is thus computationally impractical to evaluate $\nabla F$ at every iteration. To address this, full-gradient methods have largely been replaced by optimisers which use an unbiased estimator $\nabla f_{\boldsymbol{\omega}}$ of $\nabla F$ (i.e $\mathbb{E}_{\boldsymbol{\omega}}[\nabla f_{\boldsymbol{\omega}}]=\nabla F,\forall x\in\R^d$ and where $\omega\sim\pi$ is a random variable), with reduced computational cost, called a \emph{stochastic gradient}. For example, replacing the full gradient in \cref{eq:GD} with a stochastic gradient gives Stochastic Gradient Descent (SGD) \cite{Robbins1951}

\begin{equation}\label{eq:SGD}\tag{SGD}
\begin{aligned}
x_{k+1}&= x_k-h{\nabla}f_{\boldsymbol{\omega}}(x_k), \quad \textnormal{where } \boldsymbol{\omega} \sim \pi.
\end{aligned}
\end{equation}

In fact, in machine learning applications, where problems are often non-convex, the use of a stochastic gradient may be desirable, since the added noise helps: a) find better local minima \cite{keskar2016large}; b) escape saddle points \cite{ge2015escaping}; and c) provide greater generalisation \cite{ge2015escaping,keskar2016large}.
 However, since the stochastic gradients are inexact, the algorithm—when using a fixed stepsize \( h \)—no longer asymptotically converges to the true minimiser $X_*$ meaning that $\lim_{K\to\infty}x_K$ is only approximately equal to $X_*$ (see \cite{Gower2019,Bottou2004}). Instead, it stabilises at a point some distance away, an effect known as the ``stochastic gradient bias'' \cite{Bach2011}. 
 Indeed, under appropriate assumptions, this bias may be quantified.

\begin{assumption}[Finite Variance of Stochastic Gradient]\label{assum:stochastic_gradient}
We assume that the variance of the stochastic gradient 
$\nabla f_{\boldsymbol{\omega}}$ at the true minimiser $X_{*} \in \mathbb{R}^{d}$ of $F$ is finite, i.e.,
$$
\sigma_*^2\equiv \E_{\boldsymbol{\omega} \sim \pi}\left\|\nabla f_{\boldsymbol{\omega}}(X_{*})\right\|^2<\infty.
$$
\end{assumption}

\begin{theorem}[SGD-RM]\label{thm:SGDRM}
Assume a function $F:\mathbb{R}^{d} \to \mathbb{R}$ satisfying \cref{assum:smoothness} with minimiser $X_{*} \in \mathbb{R}^{d}$ and consider iterates $(x_k)_{k\in \mathbb{N}}$ generated by SGD with stepsize $0<h<1/(2L)$ and starting iterate $x_0\in\R^d$. Assume the stochastic functions $ f_{\boldsymbol{\omega}}:\mathbb{R}^{d}\to\mathbb{R}$ are continuously differentiable, convex and $L$-smooth, $\forall \boldsymbol{\omega}\sim\pi$ and satisfy $\mathbb{E}_{\boldsymbol{\omega}}[\nabla f_{\boldsymbol{\omega}}]=\nabla F$ and \cref{assum:stochastic_gradient} with constant $0<\sigma^{2}_{*}<\infty$ then
$$
\E\|x_{K}-X_*\|^2\leq(1-h\mu)^{K}\|x_0-X_*\|^2+\underbrace{2h\frac{\sigma_*^2}{\mu}}_{\textnormal{stochastic gradient bias}}.
$$
\begin{proof}
       Follows from \cite[Proof of Thm. 3.1]{Gower2019} or \cite[Thm. 5.8]{Garrigos2023}.
\end{proof}
\end{theorem}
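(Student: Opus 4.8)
The plan is to track the (expected) squared distance $a_k := \E\|x_k - X_*\|^2$ and derive a one-step recursion of the form $a_{k+1} \le (1-h\mu)a_k + C h^2$, then unroll it as a geometric series. First I would expand
\[
\|x_{k+1} - X_*\|^2 = \|x_k - X_* - h\nabla f_{\boldsymbol\omega}(x_k)\|^2 = \|x_k - X_*\|^2 - 2h\langle \nabla f_{\boldsymbol\omega}(x_k), x_k - X_*\rangle + h^2\|\nabla f_{\boldsymbol\omega}(x_k)\|^2,
\]
take the conditional expectation over $\boldsymbol\omega$ given $x_k$, and use unbiasedness $\E_{\boldsymbol\omega}[\nabla f_{\boldsymbol\omega}] = \nabla F$ to replace the cross term by $-2h\langle \nabla F(x_k), x_k - X_*\rangle$. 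For this cross term I would invoke $\mu$-strong convexity of $F$ together with $\nabla F(X_*) = 0$ to get $\langle \nabla F(x_k), x_k - X_*\rangle \ge F(x_k) - F(X_*) + \tfrac{\mu}{2}\|x_k - X_*\|^2$.

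The crux is bounding the second-moment term $\E_{\boldsymbol\omega}\|\nabla f_{\boldsymbol\omega}(x_k)\|^2$, and this is where convexity and $L$-smoothness of the \emph{individual} $f_{\boldsymbol\omega}$ (not merely of $F$) enter. For a convex $L$-smooth $g$ with minimiser-shift at $X_*$, the standard co-coercivity bound $\tfrac{1}{2L}\|\nabla g(x) - \nabla g(X_*)\|^2 \le g(x) - g(X_*) - \langle \nabla g(X_*), x - X_*\rangle$ applied to each $f_{\boldsymbol\omega}$, followed by $\|a+b\|^2 \le 2\|a\|^2 + 2\|b\|^2$, taking $\E_{\boldsymbol\omega}$, using $\nabla F(X_*)=0$ and \cref{assum:stochastic_gradient}, yields
\[
\E_{\boldsymbol\omega}\|\nabla f_{\boldsymbol\omega}(x_k)\|^2 \le 4L\big(F(x_k) - F(X_*)\big) + 2\sigma_*^2.
\]
Substituting everything back gives $\E[\|x_{k+1} - X_*\|^2 \mid x_k] \le (1-h\mu)\|x_k - X_*\|^2 - 2h(1 - 2Lh)\big(F(x_k) - F(X_*)\big) + 2h^2\sigma_*^2$. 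Since $h < 1/(2L)$ forces $1 - 2Lh > 0$ and $F(x_k) \ge F(X_*)$, the middle term is nonpositive and is simply discarded.

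Finally I would take total expectations to obtain $a_{k+1} \le (1-h\mu)a_k + 2h^2\sigma_*^2$, and unroll over $k = 0,\dots,K-1$, bounding the geometric sum $\sum_{j=0}^{K-1}(1-h\mu)^j \le \sum_{j=0}^{\infty}(1-h\mu)^j = 1/(h\mu)$ (note $0 < h\mu < 1$ since $\mu \le L$ and $h < 1/(2L)$), which produces exactly $a_K \le (1-h\mu)^K\|x_0 - X_*\|^2 + 2h\sigma_*^2/\mu$. The main obstacle is the second-moment inequality: without per-component convexity and smoothness one can only control $\E_{\boldsymbol\omega}\|\nabla f_{\boldsymbol\omega}(x_k)\|^2$ by something proportional to $\|x_k - X_*\|^2$ plus noise, which would spoil the constant in front of $\|x_k - X_*\|^2$ and fail to close the recursion under the stated stepsize range; the only other point needing care is the bookkeeping of constants ensuring $1 - 2Lh > 0$ precisely when $h < 1/(2L)$.
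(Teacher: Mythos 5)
Your proposal is correct and reproduces, step for step, the argument of \cite[Proof of Thm.~3.1]{Gower2019} (equivalently \cite[Thm.~5.8]{Garrigos2023}) to which the paper delegates its proof: one-step expansion, unbiasedness, strong convexity for the cross term, co-coercivity of each convex $L$-smooth $f_{\boldsymbol\omega}$ to control $\E_{\boldsymbol\omega}\|\nabla f_{\boldsymbol\omega}(x_k)\|^2$ by $4L(F(x_k)-F(X_*))+2\sigma_*^2$, discarding the nonpositive $-2h(1-2Lh)(F(x_k)-F(X_*))$ term, and summing the geometric series. You also correctly identify why per-component convexity and smoothness are essential (to make the second-moment bound close the recursion at the stated stepsize), so nothing is missing.
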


The stochastic gradient bias, present even when $K\to\infty$ is a distinct source of error to the finite contraction $(1-h\mu)^K$, which disappears in the asymptotic limit.
In order to remove the stochastic gradient bias and converge to $X_*$ one must use a decreasing stepsize schedule $(h_{k})_{k\in\mathbb{N}}$ satisfying the Robbins-Munro criterion: $\sum^{\infty}_{k=0}h_{k} = \infty$ and $\sum^{\infty}_{k=0}h^{2}_{k} < \infty$ \cite{Robbins1951}. The bias may also be reduced (but not eliminated entirely) via Polyak-Ruppert averaging, in which one averages the iterates after a burn-in phase and subsequently improves the accuracy of the approximation \cite{ruppert1988efficient,polyak1990new}. 

Another way to reduce the bias is to make a smart choice for the distribution $\pi$ from which one samples $\boldsymbol{\omega}$ in order to generate the stochastic gradient - that is, the ``randomisation strategy''. Implicitly, we have taken the simplest such strategy, known as Robbins-Monro (RM) and so the result of \cref{thm:SGDRM} may be said to hold for the SGD-RM algorithm. In fact, an alternative strategy, known as Random Reshuffling (RR), is  preferred in practice \cite{Recht2012,Sun2020,Bottou2009}. It is now well-established that RR offers not only greater computational efficiency — owing to improved caching and memory access \cite{Bengio2012} — but also a reduced stochastic gradient bias compared to RM when used in stochastic gradient descent (resulting in SGD-RR and SGD-RM, respectively) \cite{Mishchenko2020,Cha2023}. In particular, under the same assumptions on the objective and stochastic gradients as in \cref{thm:SGDRM}, one may show that the stochastic gradient bias is reduced from $\mathcal{O}(h)$ to $\mathcal{O}(h^2)$ (although with a more restrictive limit on the stepsize), and so, asymptotically, one obtains a better approximation of $X_*$ \cite{Mishchenko2020}.
 

\subsection{Momentum and Accelerated Convergence}\label{sec:AccConv}
While in \cref{thm:GD}, the only contribution to the error in estimating the true minimiser $X_*$ is due to finite contraction which disappears completely asymptotically, in \cref{thm:SGDRM}, the stochastic gradient bias introduces an additional error which is present even as the number of iterations $K\to\infty$.
With respect to contraction, in the full gradient case it is known that the $\mu/L$ convergence rate in \cref{thm:GD} may be accelerated by introducing momentum into the gradient descent method. Two popular choices are the Polyak Heavy Ball \cite{Polyak1964} and Nesterov methods \cite{NesterovBook},
\begin{align}
x_{k+1}&=x_k+\beta(x_k-x_{k-1})-\alpha \nabla F(x_k),\label{eq:Polyak_org}\tag{Polyak}\\
x_{k+1}&=x_k+\beta(x_k-x_{k-1})-\alpha \nabla F(x_k+\beta(x_k-x_{k-1})),\label{eq:Nesterov_org}\tag{Nesterov}
\end{align}
(for $x_0\equiv x_{-1}$ given) and $\alpha, \beta >0$. These are available as default choices of optimiser within the popular \texttt{pytorch} library \cite{paszke2019pytorch}. 
For an appropriate choice of parameters and certain function classes, Polyak and Nesterov accelerate convergence from the rate $\mu/L$ for \cref{eq:GD} in \cref{thm:GD} to $\sqrt{\mu/L}$. For the Polyak method, this acceleration is achieved only for quadratic objectives $F$ \cite{Polyak1964}; Nesterov's algorithm achieves the accelerated rate for all $\mu$-strongly convex and $L$-smooth $F$ \cite{NesterovBook}.

We shall not be concerned with the accelerated convergence of momentum-based methods in the full gradient context in this article. It is of greater interest to us how such methods behave when the full gradient $F$ in \cref{eq:Polyak_org,eq:Nesterov_org} is replaced by a stochastic gradient. In this way one may derive stochastic gradient momentum-based optimisers \cite{StochasticHeavyBall}. We show in this article that such stochastic gradient momentum-based methods in fact \emph{reduce the (asymptotic) stochastic gradient bias} contribution to the error in \cref{thm:SGDRM}. Results for such the RR strategy in combination with momentum-type methods have recently begun to appear in the literature \cite{tran2021smg}. Our work here follows this trend.

Interestingly, both \cref{eq:Polyak_org} and \cref{eq:Nesterov_org} may be seen as discretisations of a continuous system of equations (just as \cref{eq:GD} discretises \cref{eq:GD_cont}) on $\mathbb{R}^{2d}$
\begin{equation}\label{eq:Damped}
    \begin{split}
    \frac{dX}{dt} &= V\\
    \frac{dV}{dt} &= -\nabla F(X) - \gamma V=G(X)-\gamma V,
\end{split}
\end{equation}
where $\gamma > 0$ is known as a friction parameter. Note that taking $\gamma \to \infty$ and introducing a suitable time-rescaling one recovers \eqref{eq:GD} (see \cite[Sec 6.5]{pavliotis2014stochastic}).

For example, Polyak's heavy ball method may be rewritten, introducing the momentum variable $v_k$,
(with $v_0=0$ and $x_0$ given) as the update rule
\begin{equation}\label{eq:Polyak}\tag{HB}
\begin{split}
x_{k+1} &= x_{k} + h e^{-\gamma h}v_{k} - h^{2}\nabla F(x_{k})\\
v_{k+1} &= e^{-\gamma h} v_{k} - h \nabla F(x_{k}),
\end{split}    
\end{equation}
for a stepsize $h>0$ and initialisation $(x_{0},v_{0}) \in \mathbb{R}^{2d}$.
Nesterov's accelerated gradient descent, on the other hand, corresponds to an Additive Runge-Kutta (ARK) discretisation of \eqref{eq:Damped}
\begin{equation}\label{eq:Nesterov}\tag{NAG}
    \begin{split}
x_{k+1} &= x_{k} + h e^{-\gamma h}v_{k} - h^{2}\nabla F(x_{k} + he^{-\gamma h}v_{k})\\
v_{k+1} &= e^{-\gamma h}v_{k} - h \nabla F(x_{k} + h e^{-\gamma h}v_{k}),
\end{split}
\end{equation}
for a stepsize $h>0$ and initialisation $(x_{0},v_{0}) \in \mathbb{R}^{2d}$ \cite{Dobson2025}.

\subsection{Splitting Methods}\label{sec:splitting_methods_intro}
Given this connection between both standard and momentum-based optimisation methods and discretisations of continuous systems \cref{eq:GD_cont,eq:Damped}, it seems natural to turn to the numerical analysis literature on ordinary differential equations (ODEs). In particular, we study ``splitting methods", an important class of numerical integrators \cite{SplittingMethodsActa,SplittingMethodsODEsActa}. These are typically used when considering ODEs which have the following form
\begin{equation}\label{eq:ODE}
    \frac{dX}{dt} = G(X)= \sum^{N}_{i=1}g_{i}(x),
\end{equation}
which is clearly related to \cref{eq:FiniteSum,eq:GD_cont}.
For example, the celebrated ``leapfrog" method which is used to solve Hamiltonian dynamics and within Hamiltonian Monte Carlo may be understood as a splitting method \cite{duane1987hybrid,BouRabee2018}.
We will show in \cref{sec:splitting_methods} that one epoch of Random Reshuffling (RR) may be understood as a splitting method with a randomised splitting order. We refer to this type of splitting method as a ``randomised splitting method", a term used in the literature on quantum simulation \cite{zhang2012randomized,childs2019faster}. Moreover, inspired by the splitting method framework, we propose an alternative randomisation strategy, called \textit{symmetric minibatch splitting} (SMS) and justify its superiority using analysis techniques developed for splitting methods in \cref{sec:SplittingAnalysis}. An important consequence of viewing the stochastic gradient optimisers as splitting methods is that it becomes apparent why momentum-based methods obtain reduced stochastic gradient bias using SMS (or indeed RR), while plain SGD (without momentum) does not - a benefit that is distinct to the accelerated convergence obtained by Nesterov for full gradient optimisation as discussed in \cref{sec:AccConv}.
These connections between stochastic gradient methods and splitting methods have also been explored within the context of MCMC \cite{franzese2022revisiting,paulin2024sampling,shaw2025random}.

\subsection{Contributions}

\begin{itemize}

    \item We establish an explicit link between stochastic gradient optimisers (with and without momentum) and (randomised) splitting methods. In order to do this, we detail the batching strategies used to generate stochastic gradients in \cref{sec:randomisation_strategies}, and their relation to splitting methods in \cref{sec:splitting_methods}.
    \item Inspired by the link to splitting methods, we introduce a new randomisation strategy based on the celebrated Strang splitting \cite{Strang1963}, called \textit{symmetric minibatch splitting} (SMS). We show that, when combined with the Stochastic Heavy Ball method \cref{eq:Polyak}, it is provably more accurate, arriving at a convergence guarantee via Lyapunov function techniques.

    This randomisation strategy has recently been proposed in the machine learning literature under the moniker of ``flip-flop" \cite{rajput2022permutation}. However, combining SMS with \cref{eq:SGD} (as in \cite{rajput2022permutation}), giving SGD-SMS, does not yield improved guarantees beyond the quadratic case. By establishing the connection to splitting methods, we are able to explain this phenomenon and provide numerical methods which yield improved accuracy in a more general setting (beyond quadratics). By employing the Heavy Ball method \cref{eq:Polyak} with a stochastic gradient generated with SMS, we are able to double the order of accuracy compared to SGD-SMS, whilst only requiring one gradient per step. More precisely, we establish convergence guarantees where the stochastic gradient bias is $\mathcal{O}(h^4)$ in MSE compared to the state-of-the-art results of $\mathcal{O}(h^2)$ in \cite{Mishchenko2020}.
    
    \item In addition, we improve the known convergence results for Heavy Ball with random reshuffling ($\mathcal{O}(h^3)$ in MSE), which is provably more accurate then SGD-RR. We illustrate that these bounds are tight by constructing a model problem which yields these bounds.
    \item We empirically confirm the effects of different strategies on logistic regression problems.
\end{itemize}

\section{Randomisation Strategies}\label{sec:randomisation_strategies}
Following on from the introduction, an important consideration is how to generate the stochastic gradient estimator, $\nabla f_{\boldsymbol{\omega}}$, which is determined by the sampling of $\boldsymbol{\omega}$. We recall that we consider a class of problems where $\nabla F$ is based on a finite sum
 \begin{equation*}
 \nabla F=\frac{1}{N}\sum_{i=1}^{N}\nabla f_i.
  \end{equation*}
 The different randomisation strategies RM, RR and SMS may then be understood as special cases of the same algorithm \cref{alg:GenAlg}, where one generates a matrix $\Omega_{n,N}(m)\in\R^{m\times n}$
 \begin{equation}\label{eq:OmegaMatrix}
     \Omega_{n,N}(m)=\begin{bmatrix}
         \omega_{11} & \omega_{12} & \ldots &  \omega_{1n}\\
         \omega_{21} & \ddots & \ldots &  \omega_{2n}\\
         \vdots & \vdots & \ddots &  \vdots\\
        \omega_{m1} & \ldots & \ldots &  \omega_{mn}
     \end{bmatrix}
     =\begin{bmatrix}
          \horzbar & \boldsymbol{\omega}_{1} &\horzbar \\
         \horzbar & \boldsymbol{\omega}_{2} &\horzbar\\
          & \vdots &\\
        \horzbar & \boldsymbol{\omega}_{m} &\horzbar\\
     \end{bmatrix},
 \end{equation}
 of $mn$ distinct entries $\omega_{ij}$ drawn without replacement from $\{1,2,\ldots,N\}$. We will write simply $\Omega$ in the following to suppress the additional notation for brevity. 
 
 For the case $mn=N$, one has maximal $m=R=N/n$ (we assume $n$ divides $N$ exactly for simplicity), and the matrix exactly partitions the set of indices. A batch is then defined as the set of indices corresponding to an $\boldsymbol{\omega}_{i}=(\omega_{i1},\omega_{i2}, \ldots,\omega_{in})$ for each $i = 1,...,m$, and gives a stochastic gradient via
 \begin{equation}\label{eq:stochgrad}
    \nabla f_{\boldsymbol{\omega}_i}\equiv\frac{1}{n}\sum_{j=1}^{n}\nabla f_{\omega_{ij}}.
 \end{equation}
Note that no two batches in the same matrix then contain the same $\nabla f_i$. 

We may now define the aforementioned strategies for generating the stochastic gradient, RM, RR and SMS. Stochastic gradient algorithms are typically run for a total number of iterations which is an integer multiple of $R$,  $K=n_eR,n_e\in\mathbb{N}$. $n_e$ is then the \emph{number of epochs}, where an epoch is composed of $R$ iterations.

\subsection{Robbins-Monro (RM)} 
For RM, one sets $m=1$ and the stochastic gradient is then generated at each step $k \in \mathbb{N}$ via sampling $\Omega\in\mathbb{R}^{1\times n}$ and then calculating $\nabla f_{\boldsymbol{\omega}_{k}}$ via \cref{eq:stochgrad}.
In expectation then, after one epoch of $R$ iterations, the algorithm sees every $\nabla f_i$ for $i=1,...,N$ once \cite{Robbins1951}.

\subsection{Random Reshuffling (RR)}
In the case of RR, one sets $m=R$ first, samples $\Omega\in\R^{R\times n}$ and then calculates $\nabla f_{\boldsymbol{\omega}_{1}}$. Over the succeeding $R-1$ steps one iterates through the remaining batches $\boldsymbol{\omega}_2,\ldots,\boldsymbol{\omega}_R$, calculating the respective stochastic gradient approximation at each step before reaching the end of the epoch. One then resamples the matrix $\Omega$ and carries out another $R$ steps. Hence, after one epoch of $R$ iterations, the algorithm has seen every $\nabla f_i$ for $i=1,...,N$ once and only once.
For an intuition as to why RR is superior to RM, see \cite[Exercise 2.10]{BertsekasBook}.

\subsection{Symmetric Minibatch Splitting (SMS)}
In the case of SMS, similarly to RR one sets $m=R$ first, samples $\Omega\in\R^{R\times n}$ and then calculates $\nabla f_{\boldsymbol{\omega}_{1}}$. Over the succeeding $R-1$ steps one iterates through the remaining batches $\boldsymbol{\omega}_2,\ldots,\boldsymbol{\omega}_R$, before reaching the end of the epoch. One then reverses the order of the batches and carries out another $R$ steps. I.e. over the following epoch one iterates through the batches in the order $\boldsymbol{\omega}_R,\boldsymbol{\omega}_{R-1}\ldots,\boldsymbol{\omega}_1$. One then resamples the matrix $\Omega$ and carries out another $2R$ steps with the same procedure. Similarly, every epoch the algorithm has seen every $\nabla f_i$ for $i=1,...,N$ once and only once, however every other epoch the choice of batch is purely deterministic conditional on the ordering in the previous epoch.

\subsection{Stochastic Gradient Algorithm}
In \cref{alg:GenAlg} we provide a general stochastic gradient algorithm which details the minibatching procedure for SGD or momentum SGD (i.e. using Nesterov or Heavy Ball) if an even number of epochs are run. Note that this can also be used in combination with any gradient-based optimisation algorithm which relies on stochastic gradients, for example, more sophisticated optimisation algorithms like Adam or AdaGrad \cite{kingma2014adam,duchi2011adaptive}, which are not discussed in this work and for which theoretical guarantees are much more challenging to derive \cite{defossez2022simple}.
\begin{algorithm}[H]
    \textbf{Input}: $m,K,N,n,z_0,h$ \Comment{$m=1$ if RM, $m=R=N/n$ if RR or SMS}
    \caption{General Stochastic Gradient Algorithm}\label{alg:GenAlg}
    \begin{algorithmic}[1]
    \State $k\gets 0$
    \While{$k\leq2n_eR$} \Comment{Assume $K=2n_e R$, $n_e\in\mathbb{N}$}
        \State Sample $\Omega_{n,N}(m)=\{\boldsymbol{\omega}_i\}_{i=1}^m$ according to \cref{eq:OmegaMatrix} 
        \For{$i=1,\ldots,m$}
            \State Generate $z_{k+1}$ using $\nabla f_{\boldsymbol{\omega}_i}$ via \cref{eq:SGD}, \cref{eq:Polyak} or \cref{eq:Nesterov}
            \State $k\gets k+1$
        \EndFor
        \If {SMS} \Comment{Reverse the minibatch ordering for next epoch.}
        \For{$i=m,\ldots,1$}
            \State Generate $z_{k+1}$ using $\nabla f_{\boldsymbol{\omega}_{i}}$ via \cref{eq:SGD}, \cref{eq:Polyak} or \cref{eq:Nesterov}
            \State $k\gets k+1$
        \EndFor
        \EndIf
    \EndWhile
    \State \Return{$(z_k)_{k=0}^K$}
    \end{algorithmic}
\end{algorithm}

\section{Splitting Methods}\label{sec:splitting_methods}
Recall that we are interested in integrating systems of the form
\begin{equation}\label{eq:EpochFlow}
\der{X}{t}\equiv\dot{X}=G(X)=\frac{1}{R}\sum_{i=1}^R g_{\boldsymbol{\omega}_i}(X),
\end{equation}
where $G(X)=-\nabla F(X),g_{\boldsymbol{\omega}_i}(X)=-\nabla f_{\boldsymbol{\omega}_i}(X)$,
over a time $Rh$, where we do not have access to the exact solution $\phi_h^{[GF]}$ for the gradient flow.
We could then approximately solve the flow using the simple Euler method, $x\gets x+RhG(x)$, which gives the gradient descent algorithm \cref{eq:GD} with timestep $Rh$.  As is clear from \cref{thm:GD}, this is not feasible since it would require that $h=\mathcal{O}((RL)^{-1})$ for convergence. Alternatively, one may have recourse to a \emph{splitting method}, since the flow has a natural decomposition as the sum of subproblems with  $g_{\boldsymbol{\omega}_i}$ \cite{SplittingMethodsODEsActa}. One may then use the gradient descent method to (approximately) solve each subproblem  via $\psi_{h,i}^{[GD]}(x)=x+(Rh)\frac{g_{\boldsymbol{\omega}_i}(x)}{R}=x+hg_{\boldsymbol{\omega}_i}(x)$, giving a composition integrator \emph{with the same cost} as the basic Euler method which uses the full gradient $G$ with  timestep $Rh$
\begin{equation}\label{eq:EulerComp}
\Psi_{Rh}(x)=\psi_{h,R}^{[GD]}\circ\psi_{h,R-1}^{[GD]}\circ\ldots\circ \psi_{h,1}^{[GD]}(x)\approx x+RhG(x).
\end{equation}
A Taylor expansion shows that the composition approximates the exact full-gradient flow over a timestep $Rh$ up to $\mathcal{O}(h^2)$, and so one could hope that it also converges close to the minimiser as does the full gradient gradient descent scheme. However, assuming that each flow $g_{\boldsymbol{\omega}_i}(x)\approx G(x)$ (i.e., the Lipschitz constants are similar and all $g_i$ are strongly convex), one has a more reasonable timestep restriction for contraction of $h=\mathcal{O}(L^{-1})$ \cite[Thm. 1]{Mishchenko2020}. If one does not randomise the batching to generate $\{g_{\boldsymbol{\omega}_i}\}_{i=1}^R$, this is known as the Incremental Gradient (IG) method; if one randomly batches once prior to executing the descent algorithm, this is known as the Shuffle Once (SO) method. Alternatively, one may re-batch the $\{g_{\boldsymbol{\omega}_i}\}_{i=1}^R$ after one epoch (of duration $Rh$ in `time') and use a new flow \cref{eq:EpochFlow} $\{g_i\}_{i=1}^R$, and apply the corresponding composition \cref{eq:EulerComp}. This gives the Random Reshuffle (RR) method, which has the advantage of avoiding using `bad' batchings many times, which can occur with IG or SO. Finally, one may also consider using a \emph{symmetric} composition integrator (using multiplication to denote composition)
\begin{equation}\label{eq:SymEulerComp}
\Psi_{2Rh}(x)=\prod_{i=R}^1\psi_{h,i}^{[GD]}\circ\prod_{i=1}^R\psi_{h,i}^{[GD]}(x)\approx x+2RhG(x),
\end{equation}
with timestep $2Rh$, of equivalent cost to \cref{eq:EulerComp}. One might hope that the symmetric scheme, due to error cancellation, in general converges to a point even closer to the minimiser than \cref{eq:EulerComp}, but in fact this is only the case for linear $G,g_{\boldsymbol{\omega}_i}$, (cf. \cref{sec:ModelProblem}). In general, the composition \cref{eq:SymEulerComp} approximates the exact full-gradient flow over a timestep $Rh$ up to $\mathcal{O}(h^2)$, the same order as for \cref{eq:EulerComp}. Naturally, one also expects to be able to use a timestep $h=\mathcal{O}(L^{-1})$. One can also imagine Symmetric IG and Symmetric SO methods; we focus on the fully randomised version which we call Symmetric Minibatch Splitting (SMS).

\subsection{Second-order dynamics}
One may also write the second-order dynamics \cref{eq:Damped} based on the gradient $G$ as 
\begin{equation}\label{eq:SecondOrderFullGrad}
    \dot{Z}=
    \begin{bmatrix}\dot{X}\\\dot{V}\end{bmatrix}=\begin{bmatrix}V\\G(X)-\gamma V\end{bmatrix}=\begin{bmatrix}V\\0\end{bmatrix}+\begin{bmatrix}0\\G(X)-\gamma V\end{bmatrix}=AZ+BZ,
\end{equation}
where ($\rho:\R^d\to\R$)
\begin{equation}\label{eq:Operators}
    A\rho(Z)=V\nabla_X\rho(Z),B\rho(Z)=(G-\gamma V)\nabla_V\rho(Z)
\end{equation}
are operators which act on vector fields and return vector fields. As for \cref{eq:EpochFlow} one does not have access to the full solution flow $\phi_{h}^{[A+B]}$, but instead of using Euler as the basic map, one may construct a splitting integrator as follows. This formulation is especially advantageous since one may write
\begin{equation}\label{eq:SecondOrderEpochFlow}
    \dot{Z}=\frac{1}{R}\sum_{i=1}^R\begin{bmatrix}V\\g_{\boldsymbol{\omega}_i}(X)-\gamma V\end{bmatrix}=\frac{1}{R}\sum_{i=1}^R\begin{bmatrix}V\\0\end{bmatrix}+\frac{1}{R}\sum_{i=1}^R\begin{bmatrix}0\\g_{\boldsymbol{\omega}_i}(X)-\gamma V\end{bmatrix}=\frac{1}{R}\sum_{i=1}^RAZ+B_{\boldsymbol{\omega}_i}Z,
\end{equation}
where the subproblems $\dot{Z}=AZ=(V,0)$ and $\dot{Z}=B_{\boldsymbol{\omega}_i}Z=(0,g_{\boldsymbol{\omega}_i}-\gamma V)$ admit exact solutions
\begin{equation}\label{eq:ExactABMaps}
\phi_{h}^{[A]}(Z)=(X+hV,V),\quad \phi_h^{[B_{\boldsymbol{\omega}_i}]}(Z)=(X,e^{-\gamma h}V+hg_{\boldsymbol{\omega}_i}(X)).
\end{equation}
To implement each stochastic gradient iteration, one may use a basic composition in either a non-symmetric form 
$\phi_{h}^{[A]}\circ\phi_{h}^{[B_{\boldsymbol{\omega}_i}]}$
(called Lie-Trotter), or a symmetric form $ \phi_{h/2}^{[A]}\circ\phi_{h}^{[B_{\boldsymbol{\omega}_i}]}\circ\phi_{h/2}^{[A]}$ (called Strang). One sees from the forms of the maps \cref{eq:ExactABMaps} that the Lie-Trotter composition is equivalent to Heavy Ball:
$$
\begin{aligned}
&v_{k+1} = e^{-\gamma h} v_{k} + hg_{\boldsymbol{\omega}_i}(x_{k})\\
&x_{k+1}= x_{k} + hv_{k+1}=x_k+he^{-\gamma h}v_{k} +h^{2}g_{\boldsymbol{\omega}_i}(x_{k}).
\end{aligned}
$$
 The full map for an epoch may then be written as a composition integrator based on the non-symmetric Lie-Trotter-like splitting,
\begin{equation}\label{eq:ABComp}
\Psi_{Rh}=\phi_{h}^{[B_{\boldsymbol{\omega}_R}]}\circ\phi_{h}^{[A]}\circ\phi_{h}^{[B_{\boldsymbol{\omega}_{R-1}}]}\circ\ldots\phi_{h}^{[B_{\boldsymbol{\omega}_1}]}\circ \phi_{h}^{[A]}\approx \phi_{h}^{[A+B]},
\end{equation}
or an alternative splitting
\begin{equation}\label{eq:SymABComp}
\begin{aligned}
\Psi_{2Rh}&=\prod_{i=R}^1\left(\phi_{h}^{[B_{\boldsymbol{\omega}_i}]}\circ \phi_{h}^{[A]}\right)\circ\prod_{i=1}^R\left(\phi_{h}^{[B_{\boldsymbol{\omega}_i}]}\circ \phi_{h}^{[A]}\right)\\
&=\phi_{-h/2}^{[A]}\circ\underbrace{\prod_{i=R}^1\left(\phi_{h/2}^{[A]}\circ\phi_{h}^{[B_{\boldsymbol{\omega}_i}]}\circ\phi_{h/2}^{[A]}\right)\circ\prod_{i=1}^R\left(\phi_{h/2}^{[A]}\circ\phi_{h}^{[B_{\boldsymbol{\omega}_{i}}]}\circ \phi_{h/2}^{[A]}\right)}_{\text{Symmetric Strang composition}}\circ\phi_{h/2}^{[A]}
\end{aligned}
\end{equation}
which one can see is almost a symmetric composition of symmetric Strang compositions. Indeed this only becomes more clear by examining the flow over $2K$ epochs
$$
\left(\Psi_{2Rh}\right)^{K}=\phi_{-h/2}^{[A]}\circ\left(\prod_{i=R}^1\left(\phi_{h/2}^{[A]}\circ\phi_{h}^{[B_{\boldsymbol{\omega}_i}]}\circ\phi_{h/2}^{[A]}\right)\circ\prod_{i=1}^R\left(\phi_{h/2}^{[A]}\circ\phi_{h}^{[B_{\boldsymbol{\omega}_i}]}\circ\phi_{h/2}^{[A]}\right)\right)^K\circ\phi_{h/2}^{[A]}.
$$
This is important since it is possible to show that the inner symmetric composition in \cref{eq:SymABComp} approximates the full flow $\phi_{Rh}^{[A+B]}$ up to $\mathcal{O}(h^3)$, while \cref{eq:ABComp} is only $\mathcal{O}(h^2)$ (the same as \cref{eq:EulerComp,eq:SymEulerComp}). This a consequence of the use of maps $\phi_h^{[A]},\phi_{h}^{[B_{\boldsymbol{\omega}_i}]}$ which are exact solutions to an ODE, and thus obey $\Psi_{-h}^{-1}=\Psi_h$ (unlike the basic Euler map). In general then, the symmetric Strang composition scheme will converge to a point closer to the minimiser (to which the full flow converges) than the other schemes. Since the full scheme in \cref{eq:SymABComp} is \emph{conjugate} to the inner scheme, (related by simple changes of coordinates at the beginning and end of the algorithm) it can be expected to have the same long-time behaviour\footnote{This may be seen informally since, if one initialises with $v_0=0$, the map $\phi_{h/2}^{[A]}$ does nothing, and then since $v\approx0$ at the end of optimisation, the final map $\phi_{-h/2}^{[A]}$ also does not greatly affect the final $x$ iterate.} \cite{Casas2023}. This is indeed the case, as shown by the proof of \cref{theorem:SGPolyak} for standard (Lie-Trotter) Heavy Ball. Understanding stochastic optimisers as splitting methods makes it obvious that momentum is key to obtaining the benefit of SMS for general functions (not just quadratic functions, as was shown for \cref{eq:SymEulerComp} in \cite{rajput2022permutation}).

\section{Backward Error Analysis of Splitting Methods}\label{sec:SplittingAnalysis}
In order to analyse splitting methods, several tools have been developed. We now review these tools and explicitly relate them to the optimisation context. The first of these concepts is the Lie derivative.

\paragraph{\textbf{Lie derivative}}
Consider the gradient flow $G=-\nabla F$, 
$G:\mathbb{R}^d\mapsto\mathbb{R}^d$ which has flow $\phi_h$ solving $dX/dt=G(X)$, i.e. $X(t+h)=\phi_h(X(t))$. Then the Lie derivative $\mathcal{L}_G$ of a general function $\rho:\mathbb{R}^d\mapsto \mathbb{R} $ is defined by
\begin{equation}
\mathcal{L}_G\rho(x)=\sum_{i=1}^dG^{(i)}(x)\partial_{X^{(i)}}\rho(x)=\langle\nabla \rho(x), G(x)\rangle.\label{eq:LieDerivative}
\end{equation}
Consequently $\left.(d/dt)\right|_{t=0}\rho(\phi_t(x))=\mathcal{L}_G\rho(x)$. Note also that the Lie derivative is linear in its vector field (i.e. its subscript), since for $f,g:\R^{d}\mapsto\R^{d}$,
\begin{equation}\label{eq:LieLin}
\mathcal{L}_f+\mathcal{L}_g=\mathcal{L}_{f+g}.
\end{equation}

Such relations extend immediately to elementwise operations on functions $\rho:\mathbb{R}^d\to\R^m$ (see \cite[Sec. 9.1]{BouRabee2018}).

The Lie derivative can be used to Taylor expand $\rho$ along the flow $\phi_h$ of $G$ about a point $x$ using 
$
\rho(\phi_h(x))=\rho(x)+\sum_{n\geq1}\left.\der{^n}{h^n}\right|_{h=0}\rho(x),
$
where it may be shown that the iterated operator $\left.\der{^n}{h^n}\right|_{h=0}=\mathcal{L}_G^n$ and consequently
\cite[Sec III.5.1]{HairerBook}
\begin{equation}\label{eq:ExpansionFlow}
\rho(\phi_h(x))=\rho(x)+\sum_{n\geq1}\frac{h^n}{n!}\mathcal{L}_G^n\rho(x)=e^{h\mathcal{L}_G}\rho(x),
\end{equation}
a relation which can be shown to hold rigourously \cite{Olver1993Book} (see \cref{def:ExpLog}).
Such relations extend immediately to elementwise operations and thus by setting $\rho=id$, one recovers that the solution can be written in the form
$\phi_h(x)=e^{h\mathcal{L}_G}x$.

\paragraph{\textbf{Taylor Expansion of Optimiser}} One then considers the mapping generated by an optimiser step $\Psi_h$  as \cite{BCM2008}  
\begin{equation}\label{eq:ExpansionOptimiser}
\rho(\Psi_h(x))=\rho(x)+\sum_{n\geq1}\frac{h^n}{n!}\left.\der{^n}{h^n}\right|_{h=0}\rho(\Psi_h(x))=\left(I+\sum_{n\geq1}h^nC_n\right)\rho(x)=C(h)\rho(x),
\end{equation}
so that (taking $\rho=id$) one sees that the expansion \cref{eq:ExpansionOptimiser} matches that of the flow \cref{eq:ExpansionFlow} up to order $p$ if $C_n=\mathcal{L}_G/n!,1\leq n\leq p$.

Similarly one may identify $D(h)=\log(C(h)),C(h)=\exp(D(h))$ and thus expand \cite{BlanesBook}
\begin{equation*}
D(h)=\sum_{n\geq 1}h^n\sum_{m=1}^n\frac{(-1)^{m+1}}{m}\sum_{j_1+\ldots+j_m=n}C_{j_1}\ldots C_{j_m}\equiv\sum_{n\geq 1}h^n D_n,
\end{equation*}
from which one derives the order conditions $D_1=\mathcal{L}_G,D_n=0,2\leq n\leq p$ via $\rho(\Psi_h(x))=\exp(D(h))\rho(x)$. Note that for the adjoint method $\Psi^*_h(x)=\Psi^{-1}_{-h}(x)$, $\rho(\Psi^*_h(x))=\exp(-D(-h))\rho(x)$. Hence for a symmetric method (for which $\Psi^*_h(x)=\Psi_h(x)$) $D(-h)=-D(h)$ and thus $D_{2m}=0,m=1,2,3\ldots$.

\paragraph{\textbf{Modified Gradient Flow}} One may (formally) associate $D(h)=h\mathcal{L}_{\widetilde{G}_h}$ to a \emph{modified gradient flow} (modified equation in numerical analysis terms)
\begin{equation}\label{eq:ModifiedEquation}
    \widetilde{G}_h=G+hG_1+h^2G_2+\ldots,
\end{equation}
such that $D_n=h^{n+1}\mathcal{L}_{G_n}$ via the linearity of the Lie derivative \cref{eq:LieLin}. Note that then the optimiser converges to the true minimiser $X_*$ if and only if $X_*$ is also a fixed point of $\widetilde{G}_h$. Thus rather than analysing the error in the iterates $\|x_k-X_*\|$ for $k \in \mathbb{N}$ directly, we may perform \emph{backward error analysis} using the modified equation, which allows one to study the long-time behaviour (asymptotic convergence and errors) of optimisation methods. We shall demonstrate the procedure for the case of gradient descent \cref{eq:GD}.

\begin{example}[Gradient Descent]
    For gradient descent, one has $\Psi_h(x)=x+hG(x)$, and so the expansion \cref{eq:ExpansionOptimiser} follows
    \begin{equation*}\begin{aligned}            \rho(\Psi_h(x))&=\rho(x)+h\langle \rho'(x),G(x)\rangle+\frac{h^2}{2}\langle G(x),\rho''(x)G(x)\rangle+\mathcal{O}(h^3)\\
    &=\rho(x)+h\mathcal{L}_G\rho(x)+\frac{h^2}{2}\left(\mathcal{L}_G^2\rho(x)-\langle \rho'(x), G'(x)G(x)\rangle\right)+\mathcal{O}(h^3),
    \end{aligned}
    \end{equation*}
    giving $C(h)=I+h\mathcal{L}_G+\frac{h^2}{2}(\mathcal{L}_G^2-\mathcal{L}_{G'G'})+\mathcal{O}(h^3)$, where $\rho':\R^d\to\R^d$ is a vector and $G',\rho'':\R^d\to\R^{d\times d}$ are matrices. Using the definition of the logarithm of an operator \cref{def:ExpLog}, one has that
    $D(h)=\log(C(h))=h\mathcal{L}_G-\frac{h^2}{2}\mathcal{L}_{G'G}+\mathcal{O}(h^3)$.
    Thus the modified gradient flow of gradient descent is
    \begin{equation}\label{eq:GDModEq}
    \widetilde{G}_h=G-\frac{h}{2}G'G+\mathcal{O}(h^3).
    \end{equation}
    Since $G(X_*)=0$, $\widetilde{G}_h(X_*)=\mathcal{O}(h^3)$. In fact, since the general term of \cref{eq:ExpansionOptimiser} follows
    $$
    \der{^n}{h^n}\rho(X_*+hG(X_*))=\der{^{n-1}}{h^{n-1}}\langle \rho'(X_*+hG(X_*)),G(X_*)\rangle=\left\langle \der{^{n-1}}{h^{n-1}}\rho'(X_*+hG(X_*)),G(X_*)\right\rangle=0,
    $$
    necessarily one has that $\widetilde{G}_h(X_*)=0$, i.e. gradient descent converges to the true minimiser. 
\end{example}

\paragraph{\textbf{Baker-Campbell-Hausdorff}} The exponential formalism is useful since compositions of maps may be written as products of exponentials\footnote{Note the reversed order of the operators relative to the applied steps \cite{SplittingMethodsODEsActa}.},
$$
\phi^{[f]}_t\circ\phi^{[g]}_t(x)=e^{t\mathcal{L}_g}e^{t\mathcal{L}_f}=\exp\left(t\mathcal{L}_g+t\mathcal{L}_f+\frac{t^2}{2}[\mathcal{L}_g,\mathcal{L}_f]+\ldots\right),
$$
which via the Baker-Campbell-Hausdorff (BCH) formula may be expanded in iterated commutators of the operators $[A,B]=AB-BA$ (\cite{Varadarajan2013,Postnikov1986},\cite[Sec. III.4]{HairerBook}). See also \cite[Appendix A.4]{BlanesBook}.

The commutator of Lie derivatives may be related directly to a vector field in the following way. 
Consider two vector fields $f,g:\R^{d}\mapsto\R^{d}$. One can see that the commutator $[\cdot,\cdot]$ applied to a test function $\rho:\R^d\mapsto\R$ is 
$$
[\mathcal{L}_f,\mathcal{L}_g]\rho=\rho'(g'f-f'g)=\mathcal{L}_{(f,g)}\rho
$$
where we define the Lie bracket of vector fields $(f,g)=g'f-f'g$, with $g',f'$ the (square) Jacobian matrices of the vector valued functions. All this may be easily extended to elementwise operations for $\rho:\R^d\mapsto\R^m$. Note that the result $(f,g):\R^d\mapsto\R^d $, just as the commutator of two Lie derivatives returns a Lie derivative.

\subsection{Asymptotic Convergence Error in Optimisation} In order to use modified equations to examine stochastic gradient optimisation, we must understand how the local errors at each step propagate\footnote{Note that we will understand `step' as either a single iteration (for RM), an epoch (for RR), or 2 epochs (for SMS).}.
Since the asymptotic error (or \emph{bias}) is what concerns us, we may pick any starting point for the iterates, and thus we select $x_0=X_*$, which facilitates the analysis.
In the following, we define the norm $\norm{\cdot}^2=\mathbb{E}_{\boldsymbol{\omega}}[\|\cdot\|_{P}^2]$ with $P$ a matrix appropriately defined so that the optimiser is contractive, although we shall gloss over these details for brevity (see \cite[Sections 4.3 and 5]{sanz2021wasserstein} for a full treatment\footnote{For quadratic objectives \eqref{eq:SecondOrderFullGrad} is contractive in a matrix norm for a particular choice of $P$ for all $\gamma > 0$ (see for example \cite{Monmarche2023}). For $L$-smooth, strongly convex objectives the results of \cite{sanz2021wasserstein} only hold in the overdamped regime $\gamma \geq \mathcal{O}(\sqrt{L})$. In the low-friction regime we later use Lyapunov techniques to establish rigorous bounds for the Heavy Ball method.}).
\begin{assumption}\label{ass:InnerProd}
Let $\alpha_{h,\boldsymbol{\omega}}(x)+\beta_{h,\boldsymbol{\omega}}(x)=\Psi_h(x)-\phi_h(x)$ be a decomposition of the one-step error of the optimisation algorithm such that 
$$
\norm{\alpha_{h,\boldsymbol{\omega}}(X_*)}=\mathcal{O}(h^{(p+1)/2}),\quad \norm{\beta_{h,\boldsymbol{\omega}}(X_*)}=\mathcal{O}(h^{(p+2)/2}),
$$ and with $e_K=\Psi_h^{K}(X_*)-X_*$, $K \in \mathbb{N}$
\begin{equation}\label{eq:Assumption1a}
    \left|\mathbb{E}\mylangle \alpha_{h,\boldsymbol{\omega}}(X_*),\Psi_h(\Psi_h^{K}(X_*))-\Psi_h(X_*)\myrangle\right|\leq C_0h\norm{\alpha_{h,\boldsymbol{\omega}}(X_*)} \norm{e_{K}}.
\end{equation}
In addition, $\Psi_h$ is a contractive mapping (for appropriate choices of problem parameters and matrix $P$)
$$
\norm{\Psi_h(x)-\Psi_h(y)}^2\leq (1-C_1h)\norm{x-y}^2,\quad \forall x,y,\quad C_1>0.
$$
\end{assumption}

Under these assumptions, it can be shown (see \cref{sec:AsymptoticConvergenceError}) that, if one has $\norm{\alpha_{h,\boldsymbol{\omega}}(X_*)}=\mathcal{O}(h^{(p+1)/2})$, $\norm{\beta_{h,\boldsymbol{\omega}}(X_*)}=\mathcal{O}(h^{(p+2)/2})$, one expects the global asymptotic convergence error $\lim_{K\to\infty}\norm{e_K}=\mathcal{O}(h^{p/2})$ (i.e. a MSE of $\mathcal{O}(h^{p})$).

We now undertake a backward error analysis of the SGD algorithms.
\begin{example}[SGD-RM]
    For stochastic gradient descent with Robbins-Monro sampling of the gradient, one has that one optimisation step follows $\Psi_h(x)=x+hg_{\boldsymbol{\omega}}(x)$, and thus (cf. \cref{eq:GDModEq}) each step follows a (random) modified flow
    \begin{equation}\label{eq:SGDModEq}
    \widetilde{g}_{h,\boldsymbol{\omega}}=g_{\boldsymbol{\omega}}-\frac{h}{2}g_{\boldsymbol{\omega}}'g_{\boldsymbol{\omega}}+\mathcal{O}(h^3).
    \end{equation}

    Hence $X_*$ is not a fixed point of $\widetilde{g}_{h,\boldsymbol{\omega}}$. We may write one step of SGD via the exponential form $\exp(h\mathcal{L}_{\widetilde{g}_{h,\boldsymbol{\omega}}})$. Consequently, the local error admits the decomposition
    $\left(e^{h\mathcal{L}_{\widetilde{g}_{h,\boldsymbol{\omega}}}}-e^{h\mathcal{L}_G}\right)X_*=\alpha_{h,\boldsymbol{\omega}}(X_*)+\beta_{h,\boldsymbol{\omega}}(X_*)$, where $\alpha_{h,\boldsymbol{\omega}}(X_*)=h\left(\mathcal{L}_{g_{\boldsymbol{\omega}}}-\mathcal{L}_G\right)X_*$ is $\mathcal{O}(h)$ and $\norm{\beta_{h,\boldsymbol{\omega}}(X_*)}=\mathcal{O}(h^2)$. One may verify that $\alpha_{h,\boldsymbol{\omega}}(X_*)$ fulfills \cref{ass:InnerProd} (see \cref{eq:SGDRMAssump}) with
    $$
\norm{\alpha_{h,\boldsymbol{\omega}}(X_*)}=h\norm{\left(\mathcal{L}_{{g}_{\boldsymbol{\omega}}}-\mathcal{L}_G\right)X_*}
=h\norm{g_{\boldsymbol{\omega}}(X_*)-G(X_*)}=h\sigma_*,
$$
    and so the asymptotic error is $\mathcal{O}(h^{1/2})$ as is well known \cite[Thm. 3.1]{Gower2019}.
\end{example}
\begin{remark}
For SGD-RM, one has $\alpha_{h,\boldsymbol{\omega}}(X_*)=h g_{\boldsymbol{\omega}}(X_*)$, which fulfills \cref{eq:Assumption1a}
since (via the Cauchy-Schwartz inequality)
\begin{equation}\label{eq:SGDRMAssump}
    \begin{aligned}
&\mathbb{E}\langle -h\nabla g_{\boldsymbol{\omega}}(X_*),\Psi_h^{K}(X_*)-X_*-h[\nabla g_{\boldsymbol{\omega}}(\Psi_h^K(X_*))-\nabla g_{\boldsymbol{\omega}}(X_*)]\myrangle\\
&=h\mathbb{E}\langle \alpha_{h,\boldsymbol{\omega}}(X_*),-[\nabla g_{\boldsymbol{\omega}}(\Psi_h^K(X_*))-\nabla g_{\boldsymbol{\omega}}(X_*)]\myrangle\leq hL\norm{\alpha_{h,\boldsymbol{\omega}}(X_*)}\norm{e_K},
\end{aligned}
\end{equation}
using that $\mathbb{E}\langle \alpha_{h,\boldsymbol{\omega}}(X_*), e_K\myrangle=0$.
\end{remark}

\begin{example}[SGD-RR]
    For stochastic gradient descent with Random Reshuffle sampling of the gradient, one has to perform the analysis at the level of an epoch. One optimisation `step' is thus composed of $R$ iterations of the map $x\to x+hg_{\boldsymbol{\omega}}(x)$, each with a modified equation of the form \cref{eq:SGDModEq} for the batches defined by $\{\boldsymbol{\omega}_{i}\}_{i=1}^R$, and so one has $\Psi_{Rh}(x)=\prod_{i=1}^R\exp(h\mathcal{L}_{\widetilde{g}_{h,\boldsymbol{\omega}_i}})x$.
    One may use the BCH formula to derive that
    \begin{equation}
        \begin{aligned}
    \Psi_{Rh}&=\exp\left(h\sum_{i=1}^R\mathcal{L}_{\widetilde{g}_{h,\boldsymbol{\omega}_i}}+\frac{h^2}{2}\sum_{i=1}^R\left[\mathcal{L}_{\widetilde{g}_{h,\boldsymbol{\omega}_i}},\sum_{j=i+1}^{R}\mathcal{L}_{\widetilde{g}_{h,\boldsymbol{\omega}_j}}\right]+\mathcal{O}(h^3)\right)\\
    &=\exp\left(Rh\mathcal{L}_G+\frac{h^2}{2}\left(\sum_{i=1}^R\left[\mathcal{L}_{g_{\boldsymbol{\omega}_i}},\sum_{j=i+1}^{R}\mathcal{L}_{g_{\boldsymbol{\omega}_j}}\right]-\mathcal{L}_{g'_{\boldsymbol{\omega}_i}g_{\boldsymbol{\omega}_i}}\right)+\mathcal{O}(h^3)\right).
        \end{aligned}
    \end{equation}
    Hence  
    \begin{equation}\label{eq:SGDRRModEq}
    \widetilde{G}_{Rh,\Omega}=RG+\frac{h}{2}\sum_{i=1}^Rg'_{\boldsymbol{\omega}_i}\left(\sum_{j=1}^{i-1}g_{\boldsymbol{\omega}_j}-\sum_{j=i}^Rg_{\boldsymbol{\omega}_j}\right)+\mathcal{O}(h^2),
    \end{equation}
    where $\widetilde{G}_{Rh,\Omega}$ is the (random) modified gradient flow of SGD-RR (over an epoch) for the batch ordering determined by the random matrix $\Omega$ defined in \cref{eq:OmegaMatrix}. Again, $X_*$ is not a fixed point of $\widetilde{G}_{Rh,\Omega}$. However, since the leading order error $$\frac{h}{2}\sum_{i=1}^Rg'_{\boldsymbol{\omega}_i}\left(\sum_{j=1}^{i-1}g_{\boldsymbol{\omega}_j}-\sum_{j=i}^Rg_{\boldsymbol{\omega}_j}\right),$$ does not fulfill \cref{ass:InnerProd} (since it does not have expectation zero conditioned on $e_K$, as was the case for SGD-RM in \cref{eq:Assumption1a}), we set $\alpha_{h,\boldsymbol{\omega}}(x)=0$, and then, writing one step of SGD-RR as $\exp(h\mathcal{L}_{\widetilde{G}_{h,\Omega}})$,the local error follows
    \begin{equation}
    \begin{aligned}
\norm{\beta_{h,\boldsymbol{\omega}}(X_*)}&=\frac{h^2}{2}\norm{\left(\sum_{i=1}^R\left[\mathcal{L}_{g_{\boldsymbol{\omega}_i}},\sum_{j=i+1}^{R}\mathcal{L}_{g_{\boldsymbol{\omega}_j}}\right]-\mathcal{L}_{g'_{\boldsymbol{\omega}_i}g_{\boldsymbol{\omega}_i}}+\mathcal{O}(h)\right)X_*}\\
&=\frac{h^2}{2}\norm{\sum_{i=1}^Rg_{\boldsymbol{\omega}_i}'(X_*)\left(\sum_{j=1}^{i-1}g_{\boldsymbol{\omega}_j}(X_*)-\sum_{j=i}^Rg_{\boldsymbol{\omega}_j}(X_*)\right)+\mathcal{O}(h)}=\mathcal{O}(h^2),
    \end{aligned}
    \end{equation}
    and so (since $p=2$) the asymptotic error one expects to see is $\mathcal{O}(h)$, as is indeed the case \cite[Thm. 1]{Mishchenko2020}.
\end{example}

\begin{example}[SGD-SMS]
    For stochastic gradient descent with Symmetric Minibatch Sampling of the gradient, one has to perform the analysis at the level of 2 epochs. One optimisation `step' is thus composed of $2R$ iterations of the map $x\to x+hg_{\boldsymbol{\omega}}(x)$, and so one has $\Psi_{Rh}(x)=\prod_{i=1}^{2R}\exp(h\mathcal{L}_{\widetilde{g}_{h,\boldsymbol{\omega}_i}})x$, where $\boldsymbol{\omega}_i=\boldsymbol{\omega}_{2R+1-i}$.
    From the equation for RR \cref{eq:SGDRRModEq} one has the (random) modified gradient flow of SGD-SMS (over 2 epochs) is (where $\Omega$ is defined in \cref{eq:OmegaMatrix})
    \begin{equation}\label{eq:EulerSMS}
    \widetilde{G}_{2Rh,\Omega}=2RG+\frac{h}{2}\sum_{i=1}^{2R}g'_{\boldsymbol{\omega}_i}\left(\sum_{j=1}^{i-1}g_{\boldsymbol{\omega}_j}-\sum_{j=i}^{2R}g_{\boldsymbol{\omega}_j}\right)+\mathcal{O}(h^2)=2RG-h\sum_{i=1}^{R}g'_{\boldsymbol{\omega}_i}g_{\boldsymbol{\omega}_i}+\mathcal{O}(h^2),
    \end{equation}
    after relabelling.
    Again, $X_*$ is not a fixed point of $\widetilde{G}_{2Rh,\Omega}$, nor can one find an $\alpha_{h,\boldsymbol{\omega}}$ which fulfills \cref{ass:InnerProd}. We may then find that SGD-SMS has local error
    \begin{equation}
    \begin{aligned}
\norm{\beta_{h,\boldsymbol{\omega}}(X_*)}&=h^2\norm{\sum_{i=1}^Rg'_{\boldsymbol{\omega}_i}(X_*)g_{\boldsymbol{\omega}_i}(X_*)+\mathcal{O}(h)}=\mathcal{O}(h^2),
    \end{aligned}
    \end{equation}
    and so the asymptotic error one expects to see is also $\mathcal{O}(h)$.
\end{example}

\paragraph{\textbf{Splitting Optimisers}} The BCH formalism is especially useful for optimisers which correspond to \emph{splitting integrators} \cite{SplittingMethodsODEsActa,BlanesBook}. We consider the Heavy Ball method, with steps of the form $\phi_{h}^{[A]}\circ\phi_{h}^{[B_{\boldsymbol{\omega}_i}]}$ for RM, \cref{eq:ABComp} for RR and \cref{eq:SymABComp} for SMS.
In the following we introduce $z=(x,v)$ with $Z_*=(X_*,0)$ the fixed point of the flow \cref{eq:SecondOrderFullGrad}.

\begin{example}[HB-RM]
    For the Heavy Ball Method with Robbins-Monro sampling of the gradient, one has that one optimisation step follows $\Psi_h(z)=e^{hB_{\boldsymbol{\omega}}}e^{hA}z$, and thus via the BCH formula $D_{\boldsymbol{\omega}}(h)=h(A+B_{\boldsymbol{\omega}})-\frac{h^2}{2}[A,B_{\boldsymbol{\omega}}]+\mathcal{O}(h^3)$. Hence
    the local error admits the decomposition
    $\left(e^{D_{\boldsymbol{\omega}}(h)}-e^{h\mathcal{L}_G}\right)X_*=\alpha_{h,\boldsymbol{\omega}}(X_*)+\beta_{h,\boldsymbol{\omega}}(X_*)$, where $\alpha_{h,\boldsymbol{\omega}}(X_*)=h\left(B_{\boldsymbol{\omega}}-B\right)Z_*$ is $\mathcal{O}(h)$ and $\norm{\beta_{h,\boldsymbol{\omega}}(X_*)}=\mathcal{O}(h^2)$. One may verify that $\alpha_{h,\boldsymbol{\omega}}(X_*)$ fulfills \cref{ass:InnerProd} (since $\mathbb{E}[B_{\boldsymbol{\omega}}|e_K]=B$), and thus from the definition of $B$ in \cref{eq:Operators}
    \begin{equation}
    \begin{aligned}
\norm{\alpha_{h,\boldsymbol{\omega}}(Z_*)}&=h\norm{\left(B_{\boldsymbol{\omega}}-B\right)Z_*}=h\norm{(g_{\boldsymbol{\omega}}(X_*)-G(X_*)}=\mathcal{O}(h),
    \end{aligned}
    \end{equation}
    and so the asymptotic error is $\mathcal{O}(h^{1/2})$ as for SGD-RM.
\end{example}

\begin{example}[HB-RR]
    For Heavy Ball with Random Reshuffle sampling of the gradient, one optimisation `step' is thus composed of $\Psi_{Rh}(z)=\prod_{i=1}^Re^{hB_{\boldsymbol{\omega}_i}}e^{hA}z$.
    Applying the BCH formula, one then has
$\prod_{i=1}^Re^{h(A_{\boldsymbol{\omega}_i}+B_{\boldsymbol{\omega}_i})-(h^2/2)[A,B_{\boldsymbol{\omega}_i}]+\ldots}$. Repeated applications of the BCH formula then give
    \begin{equation}\label{eq:BCHRR}
    \begin{aligned}
    \Psi_{Rh}(z)=\exp\bigg(&Rh(A+B)+h^2\left[A,\sum_{i=1}^R a_iB_{\boldsymbol{\omega}_i}\right]+\frac{h^2}{2}\sum_{i=1}^R\left[B_{\boldsymbol{\omega}_i},\sum_{j=i+1}^RB_{\boldsymbol{\omega}_j}\right]+\mathcal{O}(h^3)\bigg)z,
    \end{aligned}
    \end{equation}
    for some coefficients $(a_{i})^{R}_{i=1}$.
    Then, as the commutators obey for $i = 1,...,R$
    $$[A,B_{\boldsymbol{\omega}_i}]\rho=vg'_{\boldsymbol{\omega}_i}(x)\nabla_v\rho+(\gamma v-g_{\boldsymbol{\omega}_i}(x))\nabla_x\rho,\quad [B_{\boldsymbol{\omega}_i},B_{\boldsymbol{\omega}_j}]\rho=\gamma(g_{\boldsymbol{\omega}_j}(x)-g_{\boldsymbol{\omega}_i}(x))\nabla_v\rho,$$
    one has that,
    since $Z_*=(X_*,0)$,
    $$
    \begin{aligned}
    \alpha_{h,\boldsymbol{\omega}}(Z_*)&=h^2\sum_{i=1}^Ra_i[A,B_{\boldsymbol{\omega}_i}]Z_*+\frac{h^2}{2}\sum_{i=1}^R[B_{\boldsymbol{\omega}_i},\sum_{j=i+1}^{R}B_{\boldsymbol{\omega}_j}]Z_*\\
    &=h^2\left(-\sum_{i=1}^Ra_i\begin{bmatrix}
        g_{\boldsymbol{\omega}_i}(X_*)\\0
    \end{bmatrix}
    +\sum_{i=1,j>i}^R\begin{bmatrix}
        0\\ \gamma(g_{\boldsymbol{\omega}_j}(X_*)-g_{\boldsymbol{\omega}_i}(X_*))
    \end{bmatrix}\right),
    \end{aligned}
    $$
    which then fulfills \cref{ass:InnerProd}, with $\norm{\beta_h(X_*)}=\mathcal{O}(h^3)$. As
    $\norm{\alpha_h(X_*)}=\mathcal{O}(h^2)$
    one finds an asymptotic error of $\mathcal{O}(h^{3/2})$.
\end{example}

\begin{example}[S-HB-SMS]
Finally, we turn to (symmetric) Heavy Ball with SMS\footnote{As discussed above, we expect this to have essentially the same long term behaviour as standard Heavy Ball, but for a formal calculation it is necessary to use the Strang composition $e^{hA/2}e^{hB_{\boldsymbol{\omega}_i}}e^{hA/2}$ not the Lie-Trotter one $e^{hB_{\boldsymbol{\omega}_i}}e^{hA}$.}. One optimisation `step' is thus composed of two epochs of iterations with $$\Psi_{2Rh}(z)=\prod_{i=1}^R\left(e^{hA/2}e^{hB_{\boldsymbol{\omega}_i}}e^{hA/2}\right)\prod_{i=R}^1\left(e^{hA/2}e^{hB_{\boldsymbol{\omega}_i}}e^{hA/2}\right)z=e^{-hA/2}\left(\prod_{i=1}^Re^{hA}e^{hB_{\boldsymbol{\omega}_i}}\prod_{i=R}^1e^{hA}e^{hB_{\boldsymbol{\omega}_i}}\right)e^{hA/2}z,$$ i.e. a symmetric composition of symmetric compositions.

This map is then \emph{time-symmetric} since $\Psi_{-h}=\Psi_h^{-1}$, which is not the case for SGD-SMS (nor standard Heavy Ball SMS)\cite{BouRabee2018}. Thus its BCH expansion contains only odd powers of $h$ \cite[Section 2]{SplittingMethodsODEsActa}. Hence for S-HB-SMS one has
    $$
    \Psi_{2Rh}(x)=\exp\left(2Rh(A+B)+h^3V_3+\mathcal{O}(h^5)\right)x,
    $$
    with
    $$
V_3=\sum_{i,j,k=1}^Rd_{ijk}\left[B_{\boldsymbol{\omega}_i},\left[ B_{\boldsymbol{\omega}_j},B_{\boldsymbol{\omega}_k}\right]\right]+\sum_{i=1}^Rb_i\left[A,\left[A,B_{\boldsymbol{\omega}_i}\right]\right]+\sum_{i,j=1}^Rc_{ij}\left[B_{\boldsymbol{\omega}_i},\left[A,B_{\boldsymbol{\omega}_j}\right]\right]
    \bigg),
    $$
    for some coefficients for some coefficients $(b_{i})^{R}_{i=1},(c_{ij})^{R}_{i,j=1},(d_{ijk})^{R}_{i,j,k=1}$. Since the commutators obey
    \begin{equation*}
    \begin{aligned}
    [A,[A,B_{\boldsymbol{\omega}_i}]]\rho=-2vg'_{\boldsymbol{\omega}_i}(x)\nabla_x\rho+v^2g_{\boldsymbol{\omega}_i}''(x)\nabla_v\rho,\quad [B_{\boldsymbol{\omega}_i},[B_{\boldsymbol{\omega}_j},B_{\boldsymbol{\omega}_k}]]\rho=\gamma^2(g_{\boldsymbol{\omega}_k}(x)-g_{\boldsymbol{\omega}_j}(x))\nabla_v\rho,\\ 
    [B_{\boldsymbol{\omega}_i},[A,B_{\boldsymbol{\omega}_j}]]\rho=\gamma(g_{\boldsymbol{\omega}_i}(x)-\gamma v)\nabla_x\rho+\left(g_{\boldsymbol{\omega}_i}'(x)g_{\boldsymbol{\omega}_j}(x)+g_{\boldsymbol{\omega}_j}'(x)g_{\boldsymbol{\omega}_i}(x)-\gamma vg_{\boldsymbol{\omega}_i}'(x)\right)\nabla_v\rho,
    \end{aligned}
    \end{equation*}
    one has that,
    \begin{equation*}
    \begin{aligned}
    [A,[A,B_{\boldsymbol{\omega}_i}]]Z_*=0,\quad [B_{\boldsymbol{\omega}_i},[B_{\boldsymbol{\omega}_j},B_{\boldsymbol{\omega}_k}]]Z_*=(0,\gamma^2(g_{\boldsymbol{\omega}_k}(X_*)-g_{\boldsymbol{\omega}_j}(X_*)),\\ 
    [B_{\boldsymbol{\omega}_i},[A,B_{\boldsymbol{\omega}_j}]]Z_*=\left(\gamma g_{\boldsymbol{\omega}_i}(X_*),g_{\boldsymbol{\omega}_i}'(X_*)g_{\boldsymbol{\omega}_j}(X_*)+g_{\boldsymbol{\omega}_j}'(X_*)g_{\boldsymbol{\omega}_i}(X_*)\right),
    \end{aligned}
    \end{equation*}
    since $Z_*=(X_*,0)$. The first two terms have expectation 0 conditional on previous epochs. However, except in the case that the Hessian $g'_{\boldsymbol{\omega}}=\nabla^2f_{\boldsymbol{\omega}}$ is independent of $\boldsymbol{\omega}$, the last term does \emph{not} have expectation 0 conditioned on the previous epochs. Hence, one sets $\alpha_{h,\boldsymbol{\omega}}=0$ and $\beta_{h,\boldsymbol{\omega}}(X_*)=\mathcal{O}(h^3)$,
    giving an asymptotic error of $\mathcal{O}(h^2)$. In the case of constant Hessian, one may set $\alpha_{h,\boldsymbol{\omega}}=\mathcal{O}(h^3)$ and $\beta_{h,\boldsymbol{\omega}}(X_*)=\mathcal{O}(h^4)$, giving an error of order $\mathcal{O}(h^{5/2})$. This is confirmed by the analytic calculation in \cref{sec:ModelProblem}.
\end{example}
\begin{remark}
    In fact, almost any reasonable discretisation scheme of \cref{eq:Damped} achieves the same rates of convergence for the different batching schemes. Applying the Euler scheme, for example, gives gradient descent steps using the enlarged gradient $(x,v)\to (x+hv,v-\gamma hv+hg_{\boldsymbol{\omega}}(x))$, which implies that the expression for RR in \cref{eq:SGDRRModEq} becomes (using that $V_*=0$)
    \begin{equation*}
    \begin{aligned}
            \widetilde{G}_{h,\Omega}(Z_*)&=R\begin{bmatrix}
        V_* \\ -\gamma V_*+G(X_*)
    \end{bmatrix}+\frac{h}{2}\sum_{i=1}^R\Bigg(\sum_{j=1}^{i-1}\begin{bmatrix}
        -\gamma V_* + g_{\boldsymbol{\omega}_j}(X_*)\\ g_{\boldsymbol{\omega}_i}'(X_*)V_*+\gamma^2V_*-\gamma g_{\boldsymbol{\omega}_j}(X_*) 
    \end{bmatrix}\\
    &\hspace{6cm}-\sum_{j=i}^R\begin{bmatrix}
        -\gamma V_* + g_{\boldsymbol{\omega}_j}(X_*)\\ g_{\boldsymbol{\omega}_i}'(X_*)V_*+\gamma^2V_*-\gamma g_{\boldsymbol{\omega}_j}(X_*) 
    \end{bmatrix}\Bigg)+\mathcal{O}(h^2)\\
    &=\frac{h}{2}\sum_{i=1}^R\left(\sum_{j=1}^{i-1}\begin{bmatrix}
        g_{\boldsymbol{\omega}_j}(X_*) \\ -\gamma g_{\boldsymbol{\omega}_j}(X_*)
    \end{bmatrix}-\sum_{j=i}^R\begin{bmatrix}
        g_{\boldsymbol{\omega}_j}(X_*) \\ -\gamma g_{\boldsymbol{\omega}_j}(X_*)
    \end{bmatrix}\right)+\mathcal{O}(h^2),
        \end{aligned}
    \end{equation*}
    which has expectation 0 conditional on previous epochs. Consequently, one has $\alpha_{h,\boldsymbol{\omega}}(X_*)=\mathcal{O}(h^2)$ and $\beta_{h,\boldsymbol{\omega}}(X_*)=\mathcal{O}(h^3)$ as for Heavy Ball giving the asymptotic error of $\mathcal{O}(h^{3/2})$.

    Similarly, the expression for SMS in \cref{eq:EulerSMS} becomes
    $$
    \widetilde{G}_{h,\Omega}(Z_*)=h\sum_{i=1}^R\begin{bmatrix}
        g_{\boldsymbol{\omega}_i}(X_*) \\ -\gamma g_{\boldsymbol{\omega}_i}(X_*)
    \end{bmatrix}+\mathcal{O}(h^2)=Rh\begin{bmatrix}
        G(X_*) \\ -\gamma G(X_*)
    \end{bmatrix}+\mathcal{O}(h^2)=0+\mathcal{O}(h^2),
    $$
    and so one has $\alpha_{h,\boldsymbol{\omega}}(X_*)=0$ and $\beta_{h,\boldsymbol{\omega}}(X_*)=\mathcal{O}(h^3)$ as for Heavy Ball, giving an asymptotic RMSE bias of $\mathcal{O}(h^2)$.
\end{remark}

\subsection{Overview of Analysis}
The rates suggested by the analysis presented here are confirmed by experiments in \cref{sec:Numerics}, analytical calculations in \cref{sec:ModelProblem} and the convergence results of \cref{sec:convergence}, which conclusively demonstrate the superiority of (symmetric) Heavy Ball as a stochastic gradient scheme.

It is thus clear that on the one hand standard gradient descent cannot be written as a splitting method based on exact submaps. Without using a momentum-based scheme, only a higher order method can exploit symmetry. For example, one could consider a Runge-Kutta methods for \eqref{eq:GD_cont}, but they require multiple gradients per step, so they would only be desirable in high-accuracy regimes. Further, they have undesirable properties in the full-gradient setting, in particular, in \cite{SanzZyg2020} they show there exists Runge-Kutta schemes that fail to be contractive for any choice of the timestep.

\begin{remark}\label{rem:varRed}
In \cref{alg:GenAlg} and elsewhere we have assumed that $N/n$ is an integer ($n$ is the size of each minibatch, $N$ the number of data points). However, quite often, this is not the case in practice. 

In the case where $n$ does not exactly divide $N$, the final minibatch contains $n_{R}=N - n\lceil N/n\rceil < n$ datapoints. It is standard to simply average the final stochastic gradient approximation with this smaller $n_R$, i.e. using $n_R^{-1}\sum_{j=1}^{n_R}f_{\boldsymbol{\omega}_{Rj}}$. This means one can no longer understand the optimiser as a splitting method, and indeed in experiments using this procedure destroys the higher-order convergence that is shown in this work. Rather, if the final minibatch is of size $n_{R}$ for $n_{R}<n$ one must premultiply the stochastic gradient approximation by $n_{R}/n$, so that the sum of the stochastic gradients used in the epoch remains the same as $\nabla F$ (up to a multiplicative constant). This may alternatively be seen as a variance reduction technique.
\end{remark}

\section{Example with analytic computation}\label{sec:ModelProblem}
To investigate the minibatching strategies/splitting methods introduced in \cref{sec:randomisation_strategies} we first consider the application of RM, RR, SMS to stochastic gradient methods with and without momentum to a simple 1D problem from \cite{Leimkuhler2016}. Given a dataset $Y=\{y_i\}_{i=1}^N$ with $y_i\in\R$ for $i=1,...,N$ we define the objective function
\begin{equation}\label{eq:ModelProblem}
F(X)=\frac{1}{2}\sum_{i=1}^N\sigma^{-2}(X-y_i)^2. 
\end{equation}

Note that this optimisation problem can be formed via considering $\{y_i\}_{i=1}^N$ independent and identically distributed random variables under a parametrised model $y_{\vert X}\sim\mathcal{N}(X,\sigma_y^2)$, and formulating an inference problem for the parameter $X$, applying either a uniform prior as in \cite{Leimkuhler2016} or Gaussian prior (and rescaling) as in \cite{Vollmer2016}. Note also that $d$-dimensional versions of such problems which result in $X\sim\mathcal{N}(\boldsymbol{m},C)$ with $C\in\R^{d\times d}$ positive definite admit diagonalisation, giving uncoupled copies of the 1D case \cref{eq:ModelProblem}, reinforcing the relevance of this problem, since such diagonalisation commutes with the optimisation algorithms considered here.
The gradient indeed takes the form of a finite sum as in \cref{eq:FiniteSum}, for $x \in \mathbb{R}^{d}$ defined by
\begin{equation}\label{eq:ModelProblem}
    \nabla F(x)=\frac{1}{N}\sum_{i=1}^N N\sigma_i^{-2}(x-y_i),
\end{equation}
where we take $\sigma_i=\sigma$ constant. The continuous gradient flow from \cref{eq:GD} then takes the form
$$
    \frac{dX_t}{dt}=-N\sigma^{-2}(X_t-X_*),\quad X_*\equiv\frac{1}{N}\sum_{j=1}^{N}y_{i}.
$$
The stochastic gradient in \cref{eq:SGD} is (cf. \cref{eq:stochgrad}) then generated, for $x\in \mathbb{R}^{d}$ and $i = 1,...,m$ via
$$
\nabla f_{\boldsymbol{\omega_{i}}}(x)(x)=N\sigma^{-2}(x-\widehat{y}_{\boldsymbol{\omega}_i}),\quad \widehat{y}_{\boldsymbol{\omega}_i}\equiv\frac{1}{n}\sum_{j=1}^{n}y_{\omega_{ij}},
$$
 with $n\ll N$, and the vectors $\boldsymbol{\omega}_{i}$ are generated via either SMS or RR or RM randomisation procedure according to \cref{alg:GenAlg}. For ease in the following we will generically denote the stochastic estimate at iteration $k\in \mathbb{N}$ as $\widehat{y}_{k}$, without risk of confusion since the index is an integer not a vector. 

\paragraph{Results for first-order dynamics}
Applying SGD to the flow for the model problem gives iterates (after preconditioning $h\gets h\sigma^{2}/N$)
$$
x_{K+1}=(1-h)x_{K}+h\widehat{y}_K=(1-h)^{K+1}x_{0}+h\sum_{k=0}^K(1-h)^{K-k}\widehat{y}_k.
$$
For simplicity, we set $x_0=0$. In the limit ${K\to\infty}$, all SGD (-RM,-RR,-SMS) schemes converge to  $\bar{y}=X_*$ and so, following \cite{shaw2025random}, one can examine the MSE $\lim_{K\to\infty}\mathbb{E}[\|x_K-X_*\|^2]$ which may be identified with the asymptotic variance $\mathbb{V}[x_{\infty}]$.

\subsection{SGD for the model problem}
For SGD-RM, one has simply that $\mathbb{V}[x_{\infty}]=h^2V/(1-(1-h)^2)=Vh/2+\mathcal{O}(h^2)$, where $V=\mathbb{V}[\hat{y}]$. For SGD-RR, after grouping into epochs, one has 
$$
\mathbb{V}[x_{\infty}]=\sum_{t=0}^{\infty}(1-h)^{2Rt}\mathbb{V}[\widetilde{u}],\quad \mathbb{V}[\widetilde{u}]=\frac{V}{R-1}\left[\frac{Rh(1-(1-h)^{2R})}{2-h}-(1-(1-h)^{R})^2\right]
$$
so that the asymptotic MSE is $\mathbb{V}[\widetilde{u}]/(1-(1-h)^{2R})=Vh^3\frac{R \left(R + 1\right)}{24}+\mathcal{O}(h^4)$.

A calculation along similar lines for SMS shows that the iterates obey $\mathbb{E}[\|x_K-X_*\|^2]=\sum_{t=0}^{\infty}(1-h)^{4tR}\mathbb{V}[\widetilde{u}]$ where
\begin{equation*}
    \mathbb{V}[\widetilde{u}]=h^2\sum_{j,j'=0}^{2R-1}(1-h)^{(j+j')}
\text{cov}(\widehat{y}_{j},\widehat{y}_{j'})
=h^2\sum_{j,j'=0}^{R-1}+2h^2\sum_{j=0}^{R-1}\sum_{j=R}^{2R-1}+h^2\sum_{j,j'=R}^{2R-1},
\end{equation*}
(we suppress the repeated summands for brevity)
which, using that $\text{cov}(\widehat{y}_{j},\widehat{y}_{j'})=-V/(R-1)$ for $j'\neq j$ (where $V=\mathbb{E}[\|\widehat{y}-\bar{y}\|^2]$) and that $\widehat{y}_{j}=\widehat{y}_{2R-1-j}$ gives that
$$
\mathbb{V}[\widetilde{u}]=\frac{V}{R-1}\left(\frac{Rh(1-(1-h)^{4R})}{2-h}-(1-(1-h)^{2R})^2+2h^2R^2(1-h)^{2R-1}\right).
$$
This then gives the MSE as $\mathbb{V}[\widetilde{u}]/(1-(1-h)^{4R})=\frac{h^5VR \left(R + 1\right) \left(2 R - 1\right) \left(2 R + 1\right)}{180}+\mathcal{O}(h^6)
$.

\subsection{Model problem with momentum}
 The dynamics in \cref{eq:Damped} with a stochastic gradient for the model problem, after rescaling $\gamma,V$ by $\sigma/N$, and $t$ by $N/\sigma$ may be brought to the form \begin{equation*}
    \begin{split}
    \frac{dX}{dt} &= V\\
    \frac{dV}{dt} &= -(X-\widehat{y}) - \gamma V,
\end{split}
\end{equation*}
which has exact solution 
$$
Z(h)=\begin{bmatrix}
    X(h)\\
    V(h)
\end{bmatrix}=e^{hA}Z(0)+(I-e^{hA})\begin{bmatrix}
    \widehat{y}\\
    0
\end{bmatrix},\quad A=\begin{bmatrix}
    0&1\\
    -1&-\gamma
\end{bmatrix}.
$$
 In order to draw conclusions about general momentum-based optimisers, we consider the stochastic gradient bias incurred when using the exact solution, since all relevant optimisations schemes, in the $h\to0$ limit, will converge to the exact map.
It may be seen that after $K+1$ iterations, starting from $z_0=0$, with different stochastic gradients $\widehat{u}_k\equiv (\widehat{y}_k,0)^T$ one has
$$
z_{K+1}=e^{(K+1)hA}z_0+(I-e^{hA})\sum_{k=0}^Ke^{(K-k)hA}\widehat{u}_k=(I-e^{hA})\sum_{k=0}^Ke^{(K-k)hA}\widehat{u}_k.
$$
$A$ has eigenvalues $\lambda_{\pm}=-\gamma/2\pm\sqrt{(\gamma/2)^2-1}$; assuming $\gamma\neq2$, one may diagonalise $A$ and show that $\mathbb{E}[z_{\infty}]\equiv\lim_{K\to\infty}\mathbb{E}[z_{K}]=(X_*,0)^T$. We then consider the limiting MSE $\lim_{K\to\infty}\mathbb{E}[\|x_{K}-X_*\|^2]$, i.e. the upper-left entry of the asymptotic covariance matrix of $z_K$, $\mathbb{V}[Z_\infty]$.

\paragraph{\textbf{MSGD-RM}}
For RM, the asymptotic variance is simply
$$
\mathbb{V}[Z_\infty]=(I-e^{hA})\sum_{k=0}^\infty e^{khA}\mathbb{V}[\widehat{u}]e^{khA^T}(I-e^{hA})^T,\quad \mathbb{V}[\widehat{u}]=\begin{bmatrix}
    V & 0\\ 0 & 0
\end{bmatrix},
$$
where $V=\mathbb{V}[\widehat{x}]$, since the stochastic gradients are independent between iterations. Tedious calculations give that the upper-left entry is then
$$
\frac{V}{(\lambda_+-\lambda_-)^2}\left[\frac{\lambda_-^2(1-e^{h\lambda_+})^2}{1-e^{2h\lambda_+}}-\frac{2(1-e^{h\lambda_+})(1-e^{h\lambda_-})}{1-e^{-\gamma h}}+\frac{\lambda_+^2(1-e^{h\lambda_-})^2}{1-e^{2h\lambda_-}}\right]=\frac{Vh}{2\gamma}+\mathcal{O}(h^2).
$$

\paragraph{\textbf{MSGD-RR}}
For RR, the asymptotic variance may be broken up into epochs, using independence of the stochastic gradients between epochs
$$
\mathbb{V}[Z_\infty]=\sum_{t=0}^\infty e^{RthA}\mathbb{V}[\widetilde{u}]e^{RthA^T},
$$
where $\mathbb{V}[\widetilde{u}]$ is given by
$$
\mathbb{V}[\widetilde{u}]=\frac{1}{R-1}(I-e^{hA})\left[\sum_{k=0}^{R-1} Re^{khA}\mathbb{V}[\widehat{u}]e^{khA^T}-\sum_{k,k'=0}^{R-1} e^{khA}\mathbb{V}[\widehat{u}]e^{k'hA^T}\right](I-e^{hA})^T,\quad \mathbb{V}[\widehat{u}]=\begin{bmatrix}
    V & 0\\ 0 & 0
\end{bmatrix},
$$
using that $\mathbb{V}[\widetilde{u}_j|\widetilde{u}_i]=-V/(R-1)$. Diagonalising, one may obtain the in-epoch variance $\mathbb{V}[\widetilde{u}]$ with some difficulty, and it is then easy enough to get that the upper-left entry of $\mathbb{V}[Z_\infty]$
\begin{equation*}
\begin{aligned}
\frac{V}{(R-1)(\lambda_+-\lambda_-)^2}&\Bigg[\frac{R\lambda_-^2(1-e^{h\lambda_+})^2}{1-e^{2h\lambda_+}}-\frac{\lambda_-^2(1-e^{Rh\lambda_+})^2}{1-e^{2Rh\lambda_+}}
-\frac{2R(1-e^{h\lambda_+})(1-e^{h\lambda_-})}{1-e^{-\gamma h}}+\\
&\frac{2(1-e^{Rh\lambda_+})(1-e^{Rh\lambda_-})}{1-e^{-\gamma Rh}}+\frac{R\lambda_+^2(1-e^{h\lambda_-})^2}{1-e^{2h\lambda_-}}-\frac{\lambda_+^2(1-e^{Rh\lambda_-})^2}{1-e^{2Rh\lambda_-}}\Bigg],
\end{aligned}
\end{equation*}
which a Taylor expansion shows to be $\frac{VR(R+1)h^3}{24\gamma}+\mathcal{O}(h^4)$.

\paragraph{\textbf{MSGD-SMS}}
Similarly, for SMS, the asymptotic variance may be broken up into multiples of 2 epochs, since the stochastic gradients are independent outside the symmetrically batched epochs
$$
\mathbb{V}[Z_\infty]=\sum_{t=0}^\infty e^{2RthA}\mathbb{V}[\widetilde{u}]e^{2RthA^T},
$$
where $\mathbb{V}[\widetilde{u}]$ is given by
\begin{equation*}
\begin{aligned}
\mathbb{V}[\widetilde{u}]=\frac{1}{R-1}(I-e^{hA})&\Bigg[\sum_{k=0}^{R-1} Re^{khA}\mathbb{V}[\widehat{u}]e^{khA^T}-\sum_{k,k'=0}^{R-1} e^{khA}\mathbb{V}[\widehat{u}]e^{k'hA^T}\\
&+\sum_{k=0}^{R-1} Re^{khA}\mathbb{V}[\widehat{u}]e^{-khA^T}e^{(2R-1)hA^T}-\sum_{k=0}^{R-1}\sum_{k'=R}^{2R-1} e^{khA}\mathbb{V}[\widehat{u}]e^{k'hA^T}\\
&+\sum_{k=0}^{R-1} Re^{(2R-1)hA}e^{-khA}\mathbb{V}[\widehat{u}]e^{khA^T}-\sum_{k=R}^{2R-1}\sum_{k=0}^{R-1} e^{khA}\mathbb{V}[\widehat{u}]e^{k'hA^T}\\
&+e^{RhA}\left(\sum_{k=0}^{R-1} Re^{khA}\mathbb{V}[\widehat{u}]e^{khA^T}-\sum_{k=0}^{R-1}\sum_{k=0}^{R-1} e^{khA}\mathbb{V}[\widehat{u}]e^{k'hA^T}\right)e^{RhA}
\Bigg](I-e^{hA})^T,
\end{aligned}
\end{equation*}
using that $\mathbb{V}[\widetilde{u}_j|\widetilde{u}_i]=-V/(R-1)$, and that $\widehat{u}_j=\widehat{u}_{2R-1-j}$. Diagonalising, one may obtain the in-epoch variance $\mathbb{V}[\widetilde{u}]$ with some difficulty. It is then possible to obtain a cumbersom expression for the upper-left entry of $\mathbb{V}[Z_\infty]$
which a Taylor expansion shows to be 
$$\frac{R V h^{5} \left(R + 1\right) \left(2 R - 1\right) \left(2 R + 1\right) \left(\gamma^{2} + 1\right)}{180 \gamma}
+\mathcal{O}(h^6).$$

\begin{remark} Hence, for RM, RR, SMS, whether one uses SGD or a momentum-based method, one has convergence in the MSE of order $\mathcal{O}(hR),\mathcal{O}((hR)^3),\mathcal{O}((hR)^5)$ respectively. One thus recovers the bias of \cref{thm:SGDRM} for SGD-RM, but these rates are overly optimistic for the other SGD methods, as shown in \cite{rajput2022permutation,Cha2023}.
In the case of momentum-RR, this rate matches tightly our result  in \cref{theorem:SGPolyak}. In practice, the constant variance $\sigma^2$ between batches is unrealistic, and a simple experiment with variable $\sigma_i^2$ in \cref{eq:ModelProblem} shows that the $\mathcal{O}(h^2)$ bound in \cref{theorem:SGPolyak} is also likely tight for momentum-SMS (see \cref{fig:modelproblem}).

\begin{figure}
    \centering
    \includegraphics[width=0.5\linewidth]{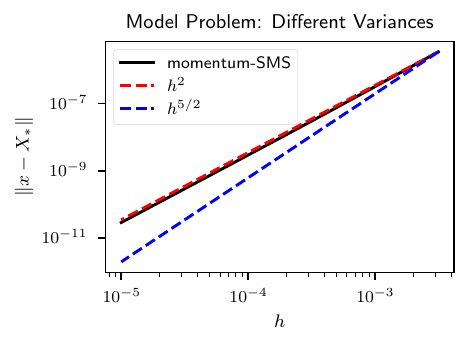}
    \caption{An experiment for the model problem \cref{eq:ModelProblem} with $\sigma_i$ for $i = 1,...,N$ not constant shows that the bias in the RMSE is no longer $\mathcal{O}(h^{5/2})$ but rather $\mathcal{O}(h^{2})$. We take $N=5$ and set $\boldsymbol{\sigma}^2=[\sigma_1^2,\ldots,\sigma_N^2]$, taking $\boldsymbol{\sigma}^2=[2.5,1.5,0.05,0.15,0.1]$ with $x_i=i,i=1,\ldots 5$, and use the Euler method to solve the resulting system \cref{eq:Damped}.}
    \label{fig:modelproblem}
\end{figure}
\end{remark}

\section{Convergence Guarantees}\label{sec:convergence}
As in \cite{Mattingly2002} we consider the Lyapunov function $\mathcal{V}:\mathbb{R}^{d} \times \mathbb{R}^{d} \to \mathbb{R}$, defined for $(x,v)\in \mathbb{R}^{2d}$ by
\begin{equation}\label{eq:Lyapunov}
    \mathcal{V}(x,v) = F(x) - F(X_{*}) + \frac{\gamma^{2}_{h}}{4}\|x-X_{*}\|^{2} + \frac{\gamma_{h}}{2}\left\langle x-X_{*},v\right\rangle + \frac{1}{2}\|v\|^{2},
\end{equation}
where $\gamma_{h} \to \gamma$ as $h\to 0$ and depends on the discretisation (see \cite{LePaWh24}) and $\|\cdot\|$ is the standard Euclidean 2-norm. For the Heavy Ball method we choose $\gamma_{h} = \frac{1-\eta}{h\eta}$ in the convergence analysis, where $\eta = e^{-h\gamma}$.

We have the following equivalency conditions for this Lyapunov function \cite{Mattingly2002}.
\begin{lemma}\label{lemma:equiv}
For $(x,v) \in \mathbb{R}^{d} \times \mathbb{R}^{d}$ we have that
    \begin{align*}
        \mathcal{V}(x,v) \geq \frac{1}{8}\|v\|^{2} + \frac{\gamma^{2}_{h}}{12}\|x-X_{*}\|^{2}
    \end{align*}
    and
    \begin{align*}
        \mathcal{V}(x,v) \leq F(x) - F(X_{*}) + \frac{\gamma^{2}_{h}}{2}\|x-X_{*}\|^{2} +\|v\|^{2}.
    \end{align*}
\end{lemma}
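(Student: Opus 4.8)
The plan is to reduce both estimates to elementary algebra on the quadratic part of $\mathcal{V}$, using as the only analytic input that $X_{*}$ minimises $F$, hence $F(x) - F(X_{*}) \geq 0$ (this needs only convexity of $F$, which holds under \cref{assum:smoothness}). Throughout I write $r := x - X_{*}$, so that \eqref{eq:Lyapunov} becomes $\mathcal{V}(x,v) = \bigl(F(x)-F(X_{*})\bigr) + \tfrac{\gamma_{h}^{2}}{4}\|r\|^{2} + \tfrac{\gamma_{h}}{2}\langle r,v\rangle + \tfrac12\|v\|^{2}$, and the remaining ingredient is the weighted arithmetic--geometric mean inequality $\lambda s^{2} + \mu t^{2} \geq 2\sqrt{\lambda\mu}\,st$ together with Cauchy--Schwarz.

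For the upper bound I would leave $F(x)-F(X_{*})$ untouched and control only the cross term: applying the weighted AM--GM inequality with $s = \gamma_{h}\|r\|$, $t = \|v\|$ and $\lambda = \mu = \tfrac14$ gives $\tfrac{\gamma_{h}}{2}\langle r,v\rangle \leq \tfrac{\gamma_{h}}{2}\|r\|\,\|v\| \leq \tfrac{\gamma_{h}^{2}}{4}\|r\|^{2} + \tfrac14\|v\|^{2}$. Substituting into $\mathcal{V}$ yields $\mathcal{V}(x,v) \leq F(x)-F(X_{*}) + \tfrac{\gamma_{h}^{2}}{2}\|r\|^{2} + \tfrac34\|v\|^{2}$, which is stronger than the claimed bound since $\tfrac34 < 1$.

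For the lower bound I would first discard the nonnegative term $F(x)-F(X_{*})$, reducing the claim to $\tfrac{\gamma_{h}^{2}}{4}\|r\|^{2} + \tfrac{\gamma_{h}}{2}\langle r,v\rangle + \tfrac12\|v\|^{2} \geq \tfrac{\gamma_{h}^{2}}{12}\|r\|^{2} + \tfrac18\|v\|^{2}$, i.e. to the nonnegativity of the quadratic form $Q(r,v):=\tfrac{\gamma_{h}^{2}}{6}\|r\|^{2} + \tfrac{\gamma_{h}}{2}\langle r,v\rangle + \tfrac38\|v\|^{2}$, whose coefficients are exactly the slack available ($\tfrac14-\tfrac{1}{12}=\tfrac16$ on $\|r\|^{2}$ and $\tfrac12-\tfrac18=\tfrac38$ on $\|v\|^{2}$). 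By Cauchy--Schwarz $\tfrac{\gamma_{h}}{2}\langle r,v\rangle \geq -\tfrac{\gamma_{h}}{2}\|r\|\,\|v\|$, and the weighted AM--GM inequality with $s = \gamma_{h}\|r\|$, $t = \|v\|$, $\lambda = \tfrac16$, $\mu = \tfrac38$ gives $\tfrac{\gamma_{h}}{2}\|r\|\,\|v\| \leq \tfrac{\gamma_{h}^{2}}{6}\|r\|^{2}+\tfrac38\|v\|^{2}$, because $2\sqrt{\tfrac16\cdot\tfrac38}=\tfrac12$ matches the cross-term coefficient after factoring out $\gamma_{h}$; equivalently, $Q$ has vanishing discriminant $\bigl(\tfrac{\gamma_{h}}{2}\bigr)^{2}-4\cdot\tfrac{\gamma_{h}^{2}}{6}\cdot\tfrac38 = 0$, so it is a nonnegative perfect square. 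Combining the two estimates gives $\mathcal{V}(x,v)\geq \tfrac{\gamma_{h}^{2}}{12}\|r\|^{2}+\tfrac18\|v\|^{2}$.

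There is no genuine obstacle here; the computation is routine. The only point that needs care is the choice of weight in the AM--GM (Young) inequality for the lower bound: it is forced to be the critical one (equivalently, the discriminant of $Q$ must vanish), so a lazier split of the slack would not close the estimate. It is also worth recording that both inequalities hold for \emph{every} $\gamma_{h} > 0$ with no restriction on $h$, hence uniformly in the regime $h\to 0$, $\gamma_{h}\to\gamma$ relevant later, and that the constants appearing in the statement are not sharp but are convenient values that suffice for the Lyapunov argument that follows.
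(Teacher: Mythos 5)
Your proof is correct, and since the paper gives no proof of this lemma (it simply cites Mattingly et al.\ 2002), your argument is the self-contained version of the standard reasoning one would find there: discard $F(x)-F(X_*)\geq 0$ for the lower bound and absorb the cross term $\tfrac{\gamma_h}{2}\langle r,v\rangle$ via Cauchy--Schwarz and a Young (weighted AM--GM) split, with the weights forced to the critical choice so that the residual quadratic form has vanishing discriminant; for the upper bound, a symmetric Young split of the same cross term suffices and in fact yields the slightly sharper constant $\tfrac34$ on $\|v\|^{2}$. One small slip: you invoke \cref{assum:smoothness}, but the operative hypothesis in \cref{sec:convergence} is \cref{assum:smoothness_sg}; the point is immaterial since both give strong convexity of $F$, hence $F(x)\geq F(X_*)$, which is the only analytic fact your lower bound uses (the upper bound uses none).
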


This choice of Lyapunov function or variants of it are popular for proving the convergence of momentum based optimisation schemes (see \cite{Fazlyab2018,SanzZygOpt,KaroniOpt,Bach2023}
and references therein).

In addition to the Lyapunov function \cref{eq:Lyapunov}, we shall also require some assumptions on $F$ and the component functions $f_{\boldsymbol{\omega}}$.

\begin{assumption}[Components are convex and $L$-smooth]\label{assum:smoothness_sg}
For some positive constants $\mu,L \in \mathbb{R}_{+}$ we assume the potential $F:\mathbb{R}^{d} \to \mathbb{R}$ is of the form \cref{eq:FiniteSum} and is continuously differentiable, $L$-smooth and $\mu$-strongly convex for some positive constants $\mu,L \in \mathbb{R}_{+}$. In addition we assume that for any instance of $\boldsymbol{\omega} \subset \{1,2,...,N \}$ such that $|\boldsymbol{\omega}| = n<N$, $f_{\boldsymbol{\omega}}:\mathbb{R}^{d}\to\mathbb{R}$ defined by $f_{\boldsymbol{\omega}} = \frac{1}{n}\sum_{i \in \boldsymbol{\omega}}f_{i}$ are continuously differentiable, convex and $L$-smooth.
\end{assumption}  

\begin{theorem}\label{theorem:SGPolyak}
Under \cref{assum:smoothness_sg} consider the stochastic gradient Polyak Heavy Ball scheme \cref{eq:Polyak} with $\gamma > 0$, $0<h< \min\{1/2R\gamma,1/2R\sqrt{L}\}$. Assuming the stochastic gradients also satisfy \cref{assum:stochastic_gradient}, then we have that the iterates $(x_{i},v_{i})_{i\in \mathbb{N}}$ for $k \in \mathbb{N}$ satisfy;
    \begin{align*}
    \mathbb{E}[F(x_{2kR}) - F(X_{*})] \leq \mathbb{E}\mathcal{V}(x_{2kR},v_{2kR}) \leq (1-chR)^{k}\mathcal{V}(x_{0},v_{0}) + \frac{C(\mu,L,\gamma)h^{3}R^{2}\sigma_*^{2}}{c},
\end{align*}
when the stochastic gradients are generated according using Random Reshuffling; and
    \begin{align*}
    \mathbb{E}[F(x_{2kR}) - F(X_{*})] \leq \mathbb{E}\mathcal{V}(x_{2kR},v_{2kR}) \leq (1-chR)^{k}\mathcal{V}(x_{0},v_{0}) + \frac{C(\mu,L,\gamma)h^{4}R^{3}\sigma_*^{2}}{c},
\end{align*}
when the stochastic gradients are generated according to the Symmetric Minibatching Strategy. Here,
\begin{align*}
    c&= \frac{\min\left\{\gamma,\mu\gamma/\gamma^{2}_{h}\right\}}{4} - C(\mu,L,\gamma)hR,
\end{align*}
where $C(\mu,L,\gamma)$ is a constant depending on $m,L$ and $\gamma$.
\begin{proof}
    Follows from \cref{lemma:SGD,lem:randomization}
\end{proof}
\end{theorem}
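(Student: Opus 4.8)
The plan is to reduce \cref{theorem:SGPolyak} to a single‑block contraction estimate iterated into a geometric series. Group the iterations into blocks of $2R$ consecutive steps — two independently reshuffled epochs for Random Reshuffling, one forward‑plus‑reverse double epoch for the Symmetric Minibatching Strategy. The two ingredients are \cref{lemma:SGD}, a per‑block drift inequality for the Lyapunov function \cref{eq:Lyapunov} of the form
\[
\mathbb{E}\bigl[\mathcal{V}(x_{2(k+1)R},v_{2(k+1)R}) \bigm| \mathcal{F}_{2kR}\bigr] \;\le\; (1-chR)\,\mathcal{V}(x_{2kR},v_{2kR}) + \mathcal{E}_{k},
\]
valid for $0<h<\min\{1/(2R\gamma),1/(2R\sqrt L)\}$ with $c$ as in the statement, where $\mathcal{E}_k$ collects the mismatch between the $2R$ stochastic gradients used in the block and the full gradient; and \cref{lem:randomization}, which bounds $\mathbb{E}[\mathcal{E}_k] \le C(\mu,L,\gamma)\,h^4R^3\sigma_*^2$ under the RR ordering and $\mathbb{E}[\mathcal{E}_k]\le C(\mu,L,\gamma)\,h^5R^4\sigma_*^2$ under the SMS ordering. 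Granting these, take total expectations, iterate $k$ times, and bound $\sum_{j\ge0}(1-chR)^j \le 1/(chR)$; dividing the per‑block error by $chR$ yields exactly the biases $C(\mu,L,\gamma)h^3R^2\sigma_*^2/c$ and $C(\mu,L,\gamma)h^4R^3\sigma_*^2/c$ claimed. The first inequality $\mathbb{E}[F(x_{2kR})-F(X_*)]\le\mathbb{E}\mathcal{V}(x_{2kR},v_{2kR})$ is immediate from $\mathcal{V}(x,v)=F(x)-F(X_*)+\tfrac12\|v+\tfrac{\gamma_h}{2}(x-X_*)\|^2\ge F(x)-F(X_*)$.

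For \cref{lemma:SGD} I would follow the deterministic Heavy Ball Lyapunov analyses referenced after \cref{eq:Lyapunov}: substitute the \cref{eq:Polyak} update with a generic batch gradient $\nabla f_{\boldsymbol{\omega}_i}$ into $\mathcal{V}$, use $L$‑smoothness of each $f_{\boldsymbol{\omega}}$ together with $\mu$‑strong convexity and $L$‑smoothness of $F$ from \cref{assum:smoothness_sg} to extract a one‑step drift of size $-c_0 h\,\mathcal{V}$ with $c_0=\min\{\gamma,\mu\gamma/\gamma_h^2\}/4$, up to lower‑order terms controlled by $hR$ (this is what forces the stepsize restriction and leaves $c=c_0-C(\mu,L,\gamma)hR>0$), and compose $2R$ such estimates, re‑expressing intermediate quantities via \cref{lemma:equiv} to keep the bound in terms of $\mathcal{V}$. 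All terms linear in the centred batch gradients $\nabla f_{\boldsymbol{\omega}_i}-\nabla F$, together with the quadratic remainders, are collected into $\mathcal{E}_k$. The bookkeeping of the cross term $\tfrac{\gamma_h}{2}\langle x-X_*,v\rangle$ and of the accumulation of velocity noise across the $2R$ substeps is the fiddly part here, but it is routine.

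The heart of the matter, and the step I expect to be the main obstacle, is \cref{lem:randomization}. Since we keep a general starting point (rather than taking $x_0=X_*$ as in the heuristic computations of \cref{sec:AsymptoticConvergenceError}), $\mathbb{E}[\mathcal{E}_k]$ must be controlled uniformly in the current iterate; this is done by transferring $\sigma_*^2$ from $X_*$ to the iterate via $L$‑smoothness and absorbing the resulting $\mathcal{V}$‑dependent piece into the contraction. For the RR ordering, after using $\tfrac1R\sum_i\nabla f_{\boldsymbol{\omega}_i}=\nabla F$, the block error reduces to sampling‑without‑replacement variances of partial sums $\mathbb{E}\|\sum_{j\le i}(\nabla f_{\boldsymbol{\omega}_j}(X_*)-\nabla F(X_*))\|^2\lesssim i\,\sigma_*^2$; tracking the extra power of $h$ that the Heavy Ball velocity recursion attaches to each such contribution gives $\mathbb{E}[\mathcal{E}_k]=O(h^4R^3\sigma_*^2)$, which recovers and slightly sharpens the rate of \cite{Mishchenko2020}. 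For the SMS ordering one must additionally show that the leading contribution of the reverse epoch cancels that of the forward epoch — this is exactly the statement, in the backward‑error language of \cref{sec:SplittingAnalysis}, that the symmetric (Strang‑type) composition \cref{eq:SymABComp} annihilates the $\mathcal{O}(h^2)$ term of the modified flow; here it manifests as a gain of one factor of $hR$, i.e.\ $\mathbb{E}[\mathcal{E}_k]=O(h^5R^4\sigma_*^2)$. Making this cancellation rigorous at finite $h$ rather than as a formal series, accounting for the fact that standard (Lie--Trotter) Heavy Ball with SMS is only \emph{conjugate} to the genuinely time‑symmetric scheme, and carefully handling the terms in which the per‑batch Hessians $\nabla^2 f_{\boldsymbol{\omega}}$ differ — which, as in the discussion of S‑HB‑SMS above, do \emph{not} cancel and are precisely what caps the rate at $h^4$ (in mean‑square) rather than $h^5$ — while retaining explicit dependence on $\mu,L,\gamma$ and $R$, is the delicate work.
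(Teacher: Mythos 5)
Your plan follows essentially the same two-lemma structure as the paper: a Lyapunov drift over a block of $2R$ steps, a strategy-dependent bound on the accumulated centred stochastic gradients, and a geometric iteration to get the asymptotic bias by dividing the per-block error by $chR$. The one substantive mismatch is in how the error is split between the two lemmas and what sets the SMS rate. In the paper, \cref{lemma:SGD} already performs the geometric iteration and isolates a strategy-\emph{independent} remainder of size $\mathcal{O}(h^{5}R^{4}\sigma_*^{2})$ (coming from the $L$-smoothness Taylor terms and in-epoch drift of the iterates) plus a strategy-dependent piece $\|J_{v}\|^{2}_{L^{2}}$ with $J_{v} := h\sum_{i=0}^{2R-1}\eta^{2R-1-i}\nabla f_{\omega_{i}}(X_{*})$; \cref{lem:randomization} then bounds only $\|J_{v}\|_{L^{2}}$, giving $\mathcal{O}(h^{2}R^{3/2}\sigma_*)$ for RR and $\mathcal{O}(h^{3}R^{5/2}\sigma_*)$ for SMS. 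Hence for RR the ordering term $\|J_v\|^2\sim h^4 R^3 \sigma_*^2$ dominates, whereas for SMS it is $\mathcal{O}(h^6 R^5 \sigma_*^2)$ and is \emph{subdominant} --- the final $\mathcal{O}(h^4 R^3)$ bias for SMS is set by the strategy-independent $h^5 R^4$ remainder, not (as you conjecture) by uncancelled Hessian variability in the SMS ordering term itself. The SMS cancellation in \cref{lem:randomization} is also cleaner than the conjugacy/backward-error heuristic you anticipate: since each batch appears twice, reindexing with $\omega_j = \omega_{2R-1-j}$ and using $\sum_i \nabla f_{\omega_i}(X_*) = 0$ collapses $J_v$ to $h\sum_{i=0}^{R-1}\eta^{R-i-1}\left(1-\eta^{i+1/2}\right)^{2}\nabla f_{\omega_{R-i-1}}(X_{*})$, where the squared factor $\left(1-\eta^{i+1/2}\right)^{2} = \mathcal{O}((\gamma h R)^2)$ supplies the extra two powers directly. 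One small slip: your completion of the square would give coefficient $\gamma_h^2/8$ on $\|x-X_*\|^2$ rather than the $\gamma_h^2/4$ in \cref{eq:Lyapunov}; the inequality $F(x)-F(X_*)\leq\mathcal{V}(x,v)$ still holds because the residual quadratic form is positive definite, as recorded in \cref{lemma:equiv}.
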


\begin{corollary}\label{corr:SGPolyak}
Under the assumptions of \cref{theorem:SGPolyak} we have that for $\epsilon > 0$ setting $h = \mathcal{O}(\epsilon^{1/3}/R)$ that $\mathbb{E}[F(x_{2kR})-F(X_{*})] < \epsilon$ in $$\mathcal{O}\left(\frac{1}{\epsilon^{1/3}}\log{\left(\frac{\mathcal{V}(x_{0},v_{0})}{\epsilon}\right)^{+}}\right),$$
epochs with the random reshuffling stochastic gradient policy. 

Setting $h = \mathcal{O}(\epsilon^{1/4}/R)$ we have that $\mathbb{E}[F(x_{2kR})-F(X_{*})] < \epsilon$ in $$\mathcal{O}\left(\frac{1}{\epsilon^{1/4}}\log{\left(\frac{\mathcal{V}(x_{0},v_{0})}{\epsilon}\right)^{+}}\right)$$  
epochs for the symmetric minibatching stochastic gradient policy. 
\end{corollary}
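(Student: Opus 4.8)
The plan is to insert the two-term estimate of \cref{theorem:SGPolyak} into the standard ``balance the transient against the bias'' argument. Fix $\epsilon>0$ and require each of the two contributions on the right-hand side to be at most $\epsilon/2$. For Random Reshuffling the bias contribution is $C(\mu,L,\gamma)h^{3}R^{2}\sigma_*^{2}/c$; choosing $h=\kappa\epsilon^{1/3}/R$ with $\kappa=\kappa(\mu,L,\gamma,\sigma_*)$ small enough forces this below $\epsilon/2$, granted $c$ is bounded below by a positive constant (verified in the next paragraph). With this $h$ one has $hR=\kappa\epsilon^{1/3}$, so using $1-t\le e^{-t}$ the transient term $(1-chR)^{k}\mathcal{V}(x_{0},v_{0})$ is at most $\epsilon/2$ once $k\ge \frac{1}{chR}\log\!\big(2\mathcal{V}(x_{0},v_{0})/\epsilon\big)=\frac{1}{c\kappa\epsilon^{1/3}}\log\!\big(2\mathcal{V}(x_{0},v_{0})/\epsilon\big)$. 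Since $c$ and $\kappa$ do not depend on $\epsilon$ or $R$, this is $k=\mathcal{O}\big(\epsilon^{-1/3}\log(\mathcal{V}(x_{0},v_{0})/\epsilon)^{+}\big)$ epochs, the positive part absorbing the near-trivial regime $\mathcal{V}(x_{0},v_{0})\lesssim\epsilon$. The Symmetric Minibatching case is identical save that the bias scales as $h^{4}R^{3}$, so one instead takes $h=\kappa\epsilon^{1/4}/R$, whence $hR=\kappa\epsilon^{1/4}$ and $k=\mathcal{O}\big(\epsilon^{-1/4}\log(\mathcal{V}(x_{0},v_{0})/\epsilon)^{+}\big)$.

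The only point needing care is to check that the hypotheses of \cref{theorem:SGPolyak} are met for the chosen $h$ and that $c$ stays bounded away from $0$ uniformly in $\epsilon$. Recall $c=\tfrac14\min\{\gamma,\mu\gamma/\gamma_{h}^{2}\}-C(\mu,L,\gamma)hR$ with $hR=\kappa\epsilon^{1/\alpha}$, $\alpha\in\{3,4\}$. Since $\gamma_{h}\to\gamma$ as $h\to0$, the first term tends to $\tfrac14\min\{\gamma,\mu/\gamma\}>0$ while the second is $\mathcal{O}(\epsilon^{1/\alpha})$, so there is $\epsilon_{0}=\epsilon_{0}(\mu,L,\gamma)$ with $c\ge\tfrac18\min\{\gamma,\mu/\gamma\}=:c_{*}>0$ for all $\epsilon<\epsilon_{0}$; replacing $c$ by $c_{*}$ in the counts above changes only the absorbed constants. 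Likewise the restriction $0<h<\min\{1/(2R\gamma),1/(2R\sqrt L)\}$ becomes $\kappa\epsilon^{1/\alpha}<\min\{1/(2\gamma),1/(2\sqrt L)\}$, which holds after possibly shrinking $\epsilon_{0}$. For $\epsilon\ge\epsilon_{0}$ the asserted complexity is vacuous and can be covered by enlarging the $\mathcal{O}(\cdot)$ constant, so the bound holds for every $\epsilon>0$.

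I expect no genuine obstacle beyond this uniform-in-$\epsilon$ control of $c$ together with the step-size check; the rest is the routine conversion of a ``geometric contraction plus additive bias'' bound into an iteration complexity. The structural reason the final epoch counts carry no $R$-dependence is that the admissible step size scales like $1/R$, so the product $hR$ — which governs both the per-epoch contraction factor and the size of the bias — depends only on the target accuracy $\epsilon$.
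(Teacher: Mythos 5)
Your proof is correct and is the standard conversion of a ``geometric contraction plus additive bias'' bound into an iteration complexity; the paper supplies no explicit proof of this corollary, so there is nothing to compare against beyond the obvious balancing argument, which you carry out carefully (including the uniform lower bound on $c$ and the stepsize restriction). One small imprecision worth noting: you assert that $\kappa$ is independent of $R$, but enforcing $C(\mu,L,\gamma)h^{3}R^{2}\sigma_*^{2}/c\le\epsilon/2$ with $h=\kappa\epsilon^{1/3}/R$ gives $\kappa^{3}\le cR/(2C\sigma_*^{2})$, so $\kappa$ does a priori depend on $R$ and $\sigma_*$; the $R$-dependence only cancels once one invokes $\sigma_*^{2}\propto R$, which the paper records in the remark following the corollary (citing \cite[Lemma 2.1]{shaw2025random}). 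This affects only the constants absorbed by the $\mathcal{O}(\cdot)$, not the $\epsilon$-scaling, so the corollary follows as stated.
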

\begin{remark}
    Although we prove this for the Polyak Heavy Ball discretisation \cref{eq:Polyak}, using the same argument one can arrive at the same bound for the Nesterov Accelerated Gradient method \cref{eq:Nesterov} using the parameterisation of \cite{SanzZygOpt}. This is reinforced by the experiments in \cref{sec:Numerics}, which show little difference between the Nesterov and Heavy Ball schemes.
\end{remark}

\begin{remark}
Since it can be shown that $\sigma_*^2\propto R$ (see \cite[Lemma 2.1]{shaw2025random} and \cite[Prop. 3.10]{Gower2019}), we can compare the results of \cref{theorem:SGPolyak} to the bound of \cite[Thm. 2]{Mishchenko2020} for SGD-RR of $\mathcal{O}((Rh)^2)$. We see that we are able to reduce the stochastic gradient bias to $\mathcal{O}((Rh)^{4})$ for SMS and to $\mathcal{O}((Rh)^{3})$ using RR, when using a momentum-based scheme in conjunction with these strategies. Note that, as confirmed by the analytic calculation in \cref{sec:ModelProblem}, HB-RM (or any other momentum-based scheme) has the same $\mathcal{O}(Rh)$ stochastic gradient bias as SGD-RM \cref{thm:SGDRM}.

    These improved results result in improved dependence on the desired accuracy $\epsilon$ in the complexity guarantees in \cref{corr:SGPolyak} compared to the respective complexity guarantees for SGD-RM and SGD-RR. 
\end{remark}

\section{Numerical Experiments}\label{sec:Numerics}
To verify the bias rates found by the three different routes (analytical calculation for model problem in \cref{sec:ModelProblem}, formal calculation  based on analysis of splitting methods in \cref{sec:splitting_methods}, and rigorous Lyapunov-type bounds in \cref{sec:convergence}), we perform some numerical experiments. Code to reproduce the plots is available via a repository hosted on \href{https://github.com/lshaw8317/symbatchopt}{\texttt{GitHub}}.
We consider SGD, Polyak's Heavy Ball method and Nesterov's method combined with the three different strategies (RM, RR and SMS), for some logistic regression problems adapted\footnote{We use a different prior in this work, and a second, minor, difference is that for the problem with simulated data we use 1024 datapoints, rather than $10^4$ as in \cite{Casas2022}.} from \cite{Casas2022}, with three real datasets (Chess, CTG, StatLog) and one simulated dataset (SimData). In this case, the objective takes the form for $X\in \mathbb{R}^{d}$ 
\begin{equation}\label{eq:LogRegF}
    F(X)=\frac{1}{N}\sum_{i=1}^N\frac{\lambda}{2}\|X\|_2^2 -z_iX^T\widetilde{{y}}_i+\log\left[1+\exp\left(X^T\widetilde{{y}}_i\right)\right],
\end{equation}
where the datapoints $y_i=[z_i,\widetilde{y}_i]^T$ are composed of labels $z_i\in\{0,1\}$  and feature variables $\widetilde{{y}}_i\in\R^d$ for $i = 1,...,N$. We set $\lambda=L/\sqrt{N}$, where $L=\|Y^TY\|_2/4N$ where the matrix $Y$ has entries $Y_{ij}=y_{ij},j=1,\ldots d$ (i.e $\|Y^TY\|_2$ is the maximum eigenvalue of $Y^TY$). We determine the true minimiser up to machine precision using optimally-tuned Nesterov with the full gradient, and examine the RMSE $\|x_k-X_*\|_2$ averaged over 100 stochastic gradient realisations. For all experiments, $R=8$.

For the bias plots we use an even number of epochs $n_e=2\lceil\max(5/h,500)/2\rceil$ for each timestep $h$, start from $x_0=X_*$ and plot the error in the final iterate $\|x_K-X_*\|_2$. Results are shown in \cref{fig:Exps_bias}, where in fact the convergence for the Nesterov and Heavy Ball methods is of order $\mathcal{O}(h^{5/2})$ (although for even smaller $h$, one does eventually see the theoretical $\mathcal{O}(h^{2})$) since the Hessian for the logistic regression is roughly constant between batches (due to the Bernstein-von Mises theorem, see \cite[Appendix A]{Casas2022}).

Since in practice a decreasing stepsize schedule is often used, we examine the convergence progress of the different methods according to
the schedule $h_k^{-1}=L(1 + \delta\max(0, k-20R)/R)$ where $\delta$ varies between $1/3-1/7$ for the different datasets. Again the momentum-based SMS methods converge closest to the minimiser, reinforcing their superiority (see \cref{fig:Exps_DSS}).

It should be emphasised that \emph{all methods have the same cost}, and hence the considerable improvement in performance for momentum-based SMS methods comes at no computational disadvantage.

\begin{figure}
\centering
\begin{subfigure}{.48\textwidth}
\includegraphics[width=\textwidth]{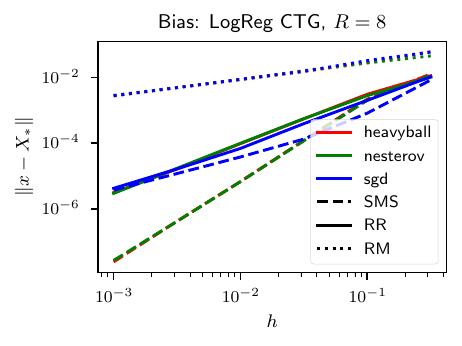}
\end{subfigure}
\begin{subfigure}{.48\textwidth}
\includegraphics[width=\textwidth]{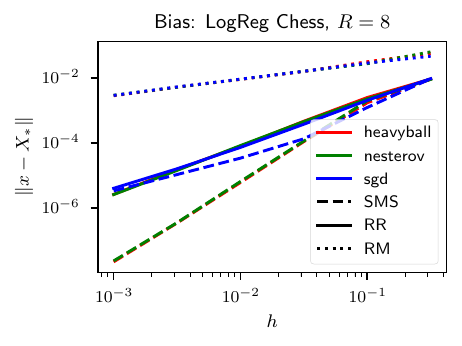}
\end{subfigure}
\begin{subfigure}{.48\textwidth}
\includegraphics[width=\textwidth]{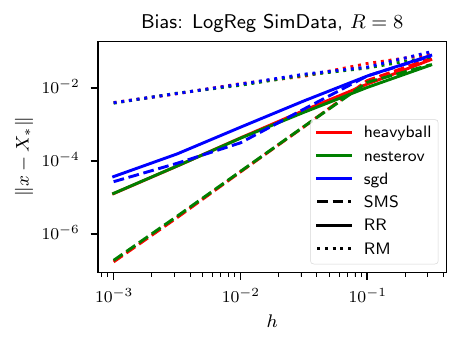}
\end{subfigure}
\begin{subfigure}{.48\textwidth}
\includegraphics[width=\textwidth]{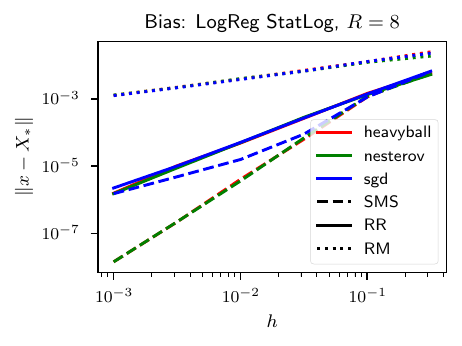}
\end{subfigure}
\caption{The error norm $\|x-X_*\|$ is the RMSE (in the Euclidean 2-norm) over 100 independent stochastic gradient realisations. Note that the final batch in each epoch is of a different size to the other batches for all the datasets except SimData, and that no reduction of order of the bias is observed (as would be the case if one had not reweighted the gradients correctly as described in \cref{rem:varRed}).}
\label{fig:Exps_bias}
\end{figure}

\begin{figure}
\centering
\begin{subfigure}{.48\textwidth}
\includegraphics[width=\textwidth]{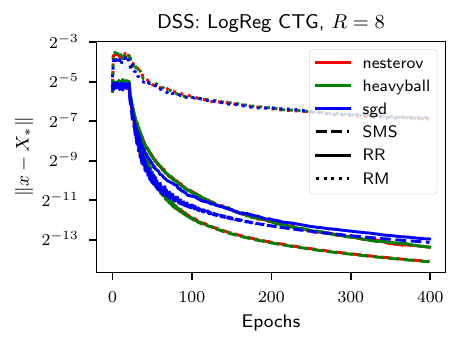}
\end{subfigure}
\begin{subfigure}{.48\textwidth}
\includegraphics[width=\textwidth]{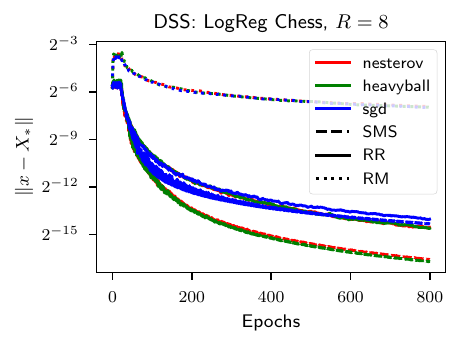}
\end{subfigure}
\begin{subfigure}{.48\textwidth}
\includegraphics[width=\textwidth]{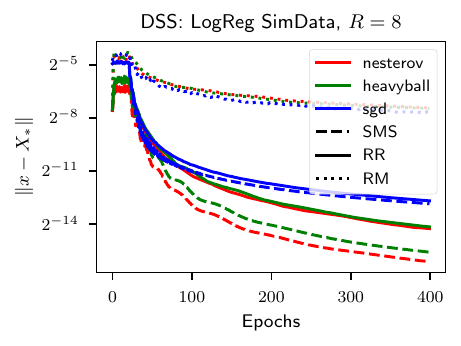}
\end{subfigure}
\begin{subfigure}{.48\textwidth}
\includegraphics[width=\textwidth]{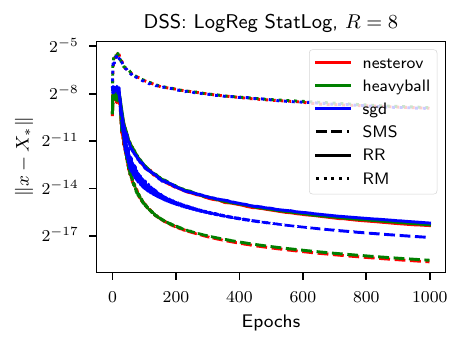}
\end{subfigure}
\caption{$\delta=1/3$ for SimData, $\delta=1/4$ for CTG, $\delta=1/6$ for StatLog and $\delta=1/7$ for Chess.}
\label{fig:Exps_DSS}
\end{figure}

\section{Conclusion and future directions}

To conclude, we establish a clear link between stochastic gradient optimiser minibatching strategies and splitting methods for ODEs. Through this perspective we investigate a minibatch strategy motivated by Strang's splitting and provide quantitative guarantees for the Heavy Ball method, which explains the considerable performance gains observed in practice.  We show that these bounds are tight on a Gaussian toy example and provide formal analysis for these randomisation strategies based on techniques from the splitting method literature.

Further work could examine whether assuming strong convexity of the component functions $f_{\boldsymbol{\omega}}$ leads to less restrictive requirements on the stepsize $h$, as is the case for SGD-RR (cf. \cite[Thms. 1 and 2]{Mishchenko2020}). In a similar vein, results in the non-convex setting under a bounded variance assumption would be of interest, since a major advantage of stochastic gradients is exploration and reaching ``good" local minima in non-convex settings \cite{Mishchenko2020}. From a practical viewpoint, it may be interesting to consider how the improved minibatching strategies behave in non-convex scenarios, for example, in neural network training. In particular, recent work \cite{beneventano2023trajectories} claims that SGD-RR traverses flat areas significantly faster than SGD-RM. One may expect that the same behaviour holds for SMS.

Often in practice, one uses Polyak-Ruppert averaging to achieve a more accurate approximation of the minimiser (see \cite{ruppert1988efficient,polyak1990new} and \cite{Dieuleveut2020} for some recent analysis). A natural extension would be to examine the asymptotic error of Polyak-Ruppert averaging for the minibatching strategies discussed in this work. A generalisation of the modified equation analysis in \cref{sec:SplittingAnalysis} following the framework of e.g. \cite{Abdulle2015} would likely be illuminating.

Finally, there is a extremely rich literature on splitting methods (and related fields, such as geometric integration). One topic of recent interest is the use of complex timesteps to attain higher order integrators, which may be of interest to optimisation researchers too, now that the connection to splitting methods is clear
\cite{Blanes2013,Castella2009,Blanes2022,Blanes2022symmetric,Bernier2023,Hansen2009,Casas2021}.

\section*{Acknowledgements} 
The authors thank Yuansi Chen and Andrea Nava for helpful discussions.

\bibliographystyle{amsplain}
\bibliography{refs}
\appendix

\section{Relevant Results for Splitting Methods}
\subsection{Extra definitions}
\begin{definition}\label{def:ExpLog}
The exponential and logarithm of a {linear, bounded} operator $A$ may be formally defined as \cite{Rossmann2006}
\begin{equation}
\exp(A)=\sum_{n=0}^{\infty}\frac{1}{n!}A^n=I+A+\frac{A^2}{2}+\ldots,\qquad\log(1-A)=-\sum_{n=1}^{\infty}\frac{A^n}{n}=-A-\frac{A^2}{2}-\frac{A^3}{3}-\ldots \label{eq:ExpLogDefs}.
\end{equation}
Note that then the derivative $\left.(d/dt)\right|_{t=0}\exp(tA)=A$.
\end{definition}
\subsection{Global Convergence Error}\label{sec:AsymptoticConvergenceError}

Consider applying $K+1$ steps of the stochastic optimiser defined by $\Psi_h$, starting from $x$. Then
\begin{gather*}
\begin{aligned}
        \norm{\Psi_h^{K+1}(x)-\phi_h^{K+1}(x)}&=\norm{\Psi_h(\Psi_h^{K}(x))-\Psi_h(\phi_h^{K}(x))+\Psi_h(\phi_h^{K}(x))-\phi_h(\phi_h^{K}(x))}\\
        &=\norm{\Psi_h(\Psi_h^{K}(x))-\Psi_h(\phi_h^{K}(x))+\alpha_{h,{\boldsymbol{\omega}}}(\phi_h^{K}(x))+\beta_{h,{\boldsymbol{\omega}}}(\phi_h^{K}(x))}\\
        & \leq \norm{\Psi_h(\Psi_h^{K}(x))-\Psi_h(\phi_h^{K}(x))+\alpha_{h,{\boldsymbol{\omega}}}(\phi_h^{K}(x))}+\norm{\beta_{h,{\boldsymbol{\omega}}}(\phi_h^{K}(x))}.
\end{aligned}
\end{gather*}
If we let $x=X_*$ then $\phi_h(X_*)=X_*$ and we write $e_K=\Psi_h^K(X_*)-X_*$ then one has
\begin{gather*}
\begin{aligned}
    \norm{e_{K+1}}&=\left(\norm{\alpha_{h,{\boldsymbol{\omega}}}(X_*)}^2+2\mathbb{E}\langle \alpha_{h,{\boldsymbol{\omega}}}(X_*),\Psi_h(\Psi_h^{K}(x))-\Psi_h(X_*)\myrangle
    +\norm{\Psi_h(\Psi_h^{K}(x))-\Psi_h(X_*)}\right)^{1/2}+\norm{\beta_{h,{\boldsymbol{\omega}}}(X_*)}\\
&\leq\left(\norm{\alpha_{h,{\boldsymbol{\omega}}}(X_*)}^2+2C_0h\norm{\alpha_{h,{\boldsymbol{\omega}}}(X_*)}\norm{e_{K}}
    +(1-C_1h)\norm{e_{K}}^2\right)^{1/2}+\norm{\beta_{h,{\boldsymbol{\omega}}}(X_*)}\\
    &\leq\left(2\norm{\alpha_{h,{\boldsymbol{\omega}}}(X_*)}^2+(1-C_1h+C_0h^2)\norm{e_{K}}^2\right)^{1/2}+\norm{\beta_{h,{\boldsymbol{\omega}}}(X_*)}
    \end{aligned}
\end{gather*}
following the argument of \cite[Thm. 23]{sanz2021wasserstein}, using the contractive properties of $\Psi_h$ for strongly convex $g_{{\boldsymbol{\omega}}}$, and \cref{ass:InnerProd}. In the last line we use $2ab\leq a^2+b^2$. Then, via \cite[Lemma 28]{sanz2021wasserstein}, since
$$\norm{e_{K+1}}\leq \sqrt{(1-C)^2\norm{e_K}^2+2\norm{\alpha_{h,{\boldsymbol{\omega}}}(X_*)}^2}+\norm{\beta_{h,{\boldsymbol{\omega}}}(X_*)},$$ one has
\begin{gather*}
    \norm{e_K}\leq (1-C)^K\norm{e_0}+\sqrt{\frac{2}{C}\norm{\alpha_{h,{\boldsymbol{\omega}}}(X_*)}^2}+\frac{\norm{\beta_{h,{\boldsymbol{\omega}}}(X_*)}}{C}=\frac{2\norm{\alpha_{h,{\boldsymbol{\omega}}}(X_*)}}{\sqrt{C_1h+\mathcal{O}(h^2)}}+\frac{2\norm{\beta_{h,{\boldsymbol{\omega}}}(X_*)}}{C_1h+\mathcal{O}(h^2)},
\end{gather*}
since $\norm{e_0}=0$ and $C=C_1h/2+\mathcal{O}(h^2)$.

\section{Proofs of Convergence Guarantees}

In the following we will use $\|\cdot\|_{L^{2}} := (\mathbb{E}\|\cdot\|^{2})^{1/2}$. We also introduce the notation $\omega_{k}$ for $k \in \mathbb{N}$ to denote the vector $\boldsymbol{\omega}$ of batch indices at iteration $k$, which is defined by the procedure introduced in \cref{sec:randomisation_strategies}. 

\begin{proposition}\label{prop:aprioribounds}
Take $0<h< \min\{1/2R\gamma,1/2R\sqrt{L}\}$ and consider iterates $(x_{i},v_{i})_{i\in \mathbb{N}}$ of the stochastic gradient Polyak discretisation \cref{eq:Polyak} with the SMS or RR strategies. Assume \cref{assum:smoothness_sg} is satisfied. Assuming the stochastic gradients also satisfy \cref{assum:stochastic_gradient}, then we have for iterate $k \leq 2R$ that
\begin{align*}
    \|x_{k}-x_{0}\|_{L^{2}} &\leq 8hR \|v_{0}\|_{L^{2}} + 11h^{2}R^{2} L\|x_{0} - X_{*}\|_{L^{2}} + 11h^{2}R\sqrt{2R}\sigma_*.
\end{align*}
\end{proposition}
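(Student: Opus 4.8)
\emph{Proof plan.} The idea is to solve the (stochastic) Heavy Ball recursion explicitly for $x_k-x_0$ in terms of $v_0$ and the batch gradients, estimate the resulting sums using $L$-smoothness of the $f_{\boldsymbol{\omega}}$ and the without-replacement structure of the batching, and then close a one-line discrete Gr\"onwall argument using the stepsize restriction. Write $\eta=e^{-h\gamma}\in(0,1)$; with stochastic gradients \cref{eq:Polyak} reads $v_{j+1}=\eta v_j-h\nabla f_{\omega_j}(x_j)$, $x_{j+1}=x_j+hv_{j+1}$. Unrolling the momentum variable gives $v_{j+1}=\eta^{j+1}v_0-h\sum_{l=0}^{j}\eta^{j-l}\nabla f_{\omega_l}(x_l)$, so summing $x_k-x_0=h\sum_{j=1}^{k}v_j$ and interchanging the order of summation yields
\begin{equation*}
  x_k-x_0 \;=\; h\,\eta\,\frac{1-\eta^{k}}{1-\eta}\,v_0 \;-\; h^{2}\sum_{l=0}^{k-1} c_l^{(k)}\,\nabla f_{\omega_l}(x_l), \qquad c_l^{(k)}:=\frac{1-\eta^{k-l}}{1-\eta}=\sum_{j=0}^{k-l-1}\eta^{j}.
\end{equation*}
Since $0<\eta\le 1$ we have $c_l^{(k)}\le k-l\le k\le 2R$, $\sum_{l=0}^{k-1}c_l^{(k)}\le \tfrac{k(k+1)}{2}$, $l\mapsto c_l^{(k)}$ is nonincreasing, and the coefficient of $v_0$ has norm at most $hk\le 2hR$.

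Now split $\nabla f_{\omega_l}(x_l)=\big(\nabla f_{\omega_l}(x_l)-\nabla f_{\omega_l}(X_*)\big)+\nabla f_{\omega_l}(X_*)$. For the first summand, $L$-smoothness of the component functions (\cref{assum:smoothness_sg}) gives $\|\nabla f_{\omega_l}(x_l)-\nabla f_{\omega_l}(X_*)\|\le L\|x_l-X_*\|\le L\big(\|x_l-x_0\|+\|x_0-X_*\|\big)$; after the factor $h^2$ and the bound on $\sum_l c_l^{(k)}$ this contributes a term of size $\mathcal{O}(h^2R^2L\|x_0-X_*\|_{L^2})$ plus a self-referential term $h^2L\sum_{l=0}^{k-1}c_l^{(k)}\|x_l-x_0\|_{L^2}$ that will feed the Gr\"onwall.

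The second summand is the crux and is where the $\sqrt{2R}$ (rather than the naive $R$) comes from. Set $S_j:=\sum_{l=0}^{j}\nabla f_{\omega_l}(X_*)$; Abel summation and monotonicity of $c_l^{(k)}$ give
\begin{equation*}
  \Big\|\sum_{l=0}^{k-1} c_l^{(k)}\,\nabla f_{\omega_l}(X_*)\Big\|_{L^{2}} \;\le\; c_{k-1}^{(k)}\|S_{k-1}\|_{L^{2}}+\sum_{l=0}^{k-2}\big(c_l^{(k)}-c_{l+1}^{(k)}\big)\|S_l\|_{L^{2}} \;\le\; c_0^{(k)}\max_{0\le j\le 2R-1}\|S_j\|_{L^{2}},
\end{equation*}
the Abel coefficients telescoping to $c_0^{(k)}\le 2R$. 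For both RR and SMS each epoch is a uniformly random partition of $\{1,\dots,N\}$, hence: $S_{R-1}=R\nabla F(X_*)=0$ exactly; for $j\le R-1$ one has $\mathbb{E}\|S_j\|^{2}=(j+1)\tfrac{R-1-j}{R-1}\sigma_*^{2}\le R\sigma_*^{2}$ (the finite-population variance, using $\mathbb{E}\langle\nabla f_{\omega_l}(X_*),\nabla f_{\omega_{l'}}(X_*)\rangle=-\sigma_*^{2}/(R-1)$ for $l\neq l'$ inside an epoch); and for $j\in[R,2R-1]$ the partial sum is, for RR, a fresh partial sum of a new epoch, and for SMS, using $\omega_{R+m}=\omega_{R-1-m}$ and exchangeability of blocks within an epoch, a contiguous block of the first epoch, so again $\|S_j\|_{L^2}\le\sqrt{R}\,\sigma_*$. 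Thus this term is at most $2R\sqrt{R}\,\sigma_*=\sqrt{2}\,R\sqrt{2R}\,\sigma_*$, which after the $h^2$ gives the announced $h^2R\sqrt{2R}\sigma_*$ contribution.

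Collecting the three estimates gives, for every $k\le 2R$,
\begin{equation*}
  \|x_k-x_0\|_{L^{2}} \;\le\; B + h^{2}L\sum_{l=0}^{k-1} c_l^{(k)}\,\|x_l-x_0\|_{L^{2}}, \qquad B\;\asymp\; hR\|v_0\|_{L^{2}}+h^{2}R^{2}L\|x_0-X_*\|_{L^{2}}+h^{2}R\sqrt{2R}\,\sigma_*.
\end{equation*}
Because $h<1/(2R\sqrt{L})$ forces $h^{2}L\sum_{l}c_l^{(k)}\le h^{2}L\cdot\tfrac{2R(2R+1)}{2}<1$, taking $\max_{k\le 2R}$ of both sides and absorbing the self-referential term yields $\max_{k\le 2R}\|x_k-x_0\|_{L^{2}}\le 4B$; tracking the numerical constants (using $c_l^{(k)}\le k-l$ where sharper) produces exactly the stated inequality with the constants $8$ and $11$. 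The main obstacle is the crux paragraph: extracting the $\sqrt{2R}$ scaling of the noise term genuinely needs the without-replacement structure of RR and SMS --- the exact cancellation $S_{R-1}=0$, the finite-population variance bound, and for SMS the exchangeability of blocks within an epoch --- combined with the Abel-summation telescoping that disposes of the $\eta$-weights; everything else is geometric-series bookkeeping and a discrete Gr\"onwall.
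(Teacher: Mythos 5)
Your proposal is correct and, at the structural level, follows the same route as the paper: unroll the Heavy Ball recursion, split $\nabla f_{\omega_l}(x_l)$ into a Lipschitz increment $\nabla f_{\omega_l}(x_l)-\nabla f_{\omega_l}(X_*)$ plus the noise $\nabla f_{\omega_l}(X_*)$, control the noise using the without-replacement structure of the epoch (the exact cancellation $\sum_{l=0}^{R-1}\nabla f_{\omega_l}(X_*)=0$ and the finite-population variance), and close with a discrete Gr\"onwall under $h<1/(2R\sqrt{L})$. The differences are in bookkeeping rather than substance. The paper first establishes a separate a priori bound on $\|x_k-X_*\|_{L^2}$ (via an exponential $e^{5(hR)^2L}$ Gr\"onwall factor) and then substitutes it into the bound on $\|x_k-x_0\|_{L^2}$, whereas you solve directly for $x_k-x_0$ in closed form and run the Gr\"onwall on $\|x_l-x_0\|_{L^2}$ itself. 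For the noise term, the paper invokes \cite[Lemma 1]{Mishchenko2020} for the $\eta$-weighted sum $\sum_i\eta^{k-1-i}\nabla f_{\omega_i}(X_*)$ with only a brief justification; your explicit Abel-summation step (telescoping the nonincreasing weights $c_l^{(k)}$ against the partial sums $S_j$, then bounding $\max_j\|S_j\|_{L^2}$ by $\sqrt{R}\sigma_*$) is arguably the cleaner and more self-contained way to pass from the unweighted Mishchenko/finite-population bound to the weighted one. Your treatment of the SMS case (reflecting indices and using exchangeability of blocks within an epoch so that $S_j$ for $j\ge R$ is again a contiguous block of a single epoch) captures exactly what is needed. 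The only soft spot is the last line: you assert without computation that careful tracking of $c_l^{(k)}\le k-l$ recovers the constants $8$ and $11$; with the crude bounds you wrote (Gr\"onwall factor $4$, $\sum_l c_l^{(k)}\le R(2R+1)$, $\max_j\|S_j\|_{L^2}\le\sqrt{R}\sigma_*$) one lands in the right ballpark but not exactly on $11$ for $R=1$, so that step should be made explicit rather than waved at. Aside from that, the argument is sound.
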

\begin{proof}
For $h < 1/(2\gamma R)$ and $k \leq 2R$ (where $\eta=e^{-\gamma h}$) one has
\begin{align*}
    &\|v_{k}\|_{L^{2}} \leq \eta^{k}\|v_{0}\|_{L^{2}} + h\|\sum^{k-1}_{i=0}\eta^{k-1-i}(\nabla f_{\omega_{i}}(x_{i})-\nabla f_{\omega_{i}}(X_{*}))\|_{L^{2}} + h\|\sum^{k-1}_{i=0}\eta^{k-1-i}\nabla f_{\omega_{i}}(X_{*})\|_{L^{2}} \\
    &\leq  \eta^{k}\|v_{0}\|_{L^{2}} + hL\sum^{k-1}_{i=0}\|x_{i}-X_{*}\|_{L^{2}} + h\sqrt{2R}\sigma_*
    \end{align*}
    using that $\eta<1$ and $\|\sum^{k-1}_{i=0}{\eta^{k-1-i}}\nabla f_{\omega_{i}}(X_{*})\|_{L^{2}}\leq(1+\eta^R)\sqrt{R/2}\sigma_*$ since via \cite[Lemma 1]{Mishchenko2020} one has $\|\sum^{j-1}_{i=0}{\eta^{j-1-i}}\nabla f_{\omega_{i}}(X_{*})\|_{L^{2}}\leq \sqrt{j(R-j)/(R-1)}\sigma_*$ for $j<R$. One may then derive
    \begin{align}
    &\|x_{k} - X_{*}\|_{L^{2}} \leq \|x_{0}-X_{*}\|_{L^{2}} + \eta h\sum^{k-1}_{i=0}\|v_{i}\|_{L^{2}} + h^{2}L\sum^{k-1}_{i=0}\|x_{i}-X_{*}\|_{L^{2}} + h^{2}\sqrt{2R}\sigma_*\nonumber\\
    &\leq \|x_{0} - X_{*}\|_{L^{2}} + 2hR\eta \|v_{0}\|_{L^{2}} + (2R\eta+1)h^{2} L \sum^{k-1}_{i=0}\|x_{i}-X_{*}\|_{L^{2}} + (2R\eta+1)h^{2}\sqrt{2R}\sigma_*\nonumber\\
    &\leq e^{5(hR)^{2}L }\left(\|x_{0} - X_{*}\|_{L^{2}} + 2hR \|v_{0}\|_{L^{2}}  + 3h^{2}R\sqrt{2R}\sigma_*\right).\label{eq:UsefulBound1}
\end{align}
Finally then,
\begin{align*}
\|x_{k}-x_{0}\|_{L^{2}} &\leq h\eta \sum_{i=0}^{k-1}\|v_{i}\|_{L^{2}} + h^{2} L \sum^{k-1}_{i=0}\|x_{i}-X_{*}\|_{L^{2}} +h^{2}R\sqrt{2R}\sigma_*\\
&\leq 2hR\eta\|v_{0}\|_{L^{2}} +(2R\eta+1) h^{2} L \sum^{k-1}_{i=0}\|x_{i}-X_{*}\|_{L^{2}} +(2R\eta+1)h^{2}\sqrt{2R}\sigma_*\\
    \|x_{k}-x_{0}\|_{L^{2}} &\leq 2hR \|v_{0}\|_{L^{2}} + 3h^{2}R^{2} L e^{5(hR)^{2}L }\left(\|x_{0} - X_{*}\|_{L^{2}} + 2hR \|v_{0}\|_{L^{2}}  + 3h^{2}R\sqrt{2R}\sigma_*\right) \\
    &+ 3h^{2}R\sqrt{2R}\sigma_*,
\end{align*}
and if we assume that $h<\frac{1}{2R\sqrt{L}}$ we have 
\begin{align*}
    \|x_{k}-x_{0}\|_{L^{2}} &\leq 8hR \|v_{0}\|_{L^{2}} + 11h^{2}R^{2} L\|x_{0} - X_{*}\|_{L^{2}} + 11h^{2}R\sqrt{2R}\sigma_*.
\end{align*}
\end{proof}

\begin{lemma}\label{lemma:SGD}
Consider the stochastic gradient Polyak discretisation \cref{eq:Polyak} with the SMS or RR strategies such that $\gamma > 0$, $0<h< \min\{1/2R\gamma,1/2R\sqrt{L}\}$ and \cref{assum:smoothness_sg} is satisfied. Assuming the stochastic gradients satisfy \cref{assum:stochastic_gradient}, then the iterates $(x_{i},v_{i})_{i\in \mathbb{N}}$ for $k \in \mathbb{N}$ satisfy
    \begin{align*}
    \mathbb{E}\mathcal{V}(x_{2kR},v_{2kR}) \leq (1-chR)^{k}\mathcal{V}(x_{0},v_{0}) + \frac{C(\mu,L,\gamma)\left[h^{5}R^{4}\sigma_*^{2} + \|J_{v}\|^{2}_{L^{2}}\right]}{chR},
\end{align*}
where $J_{v} := h\sum^{2R-1}_{i=0}\eta^{2R-1-i}\nabla f_{\omega_{i}}(X_{*})$ and
\begin{align*}
    c&= \frac{\min\left\{\gamma,\mu\gamma/\gamma^{2}_{h}\right\}}{4} - C(\mu,L,\gamma)hR.
\end{align*}
\end{lemma}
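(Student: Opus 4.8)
\emph{Proof plan.} The plan is to first prove a \emph{block estimate} --- a bound relating $\mathbb{E}\mathcal{V}(x_{2R},v_{2R})$ to $\mathbb{E}\mathcal{V}(x_{0},v_{0})$ over one block of $2R$ consecutive iterations (two epochs) --- of the form
\begin{equation*}
\mathbb{E}\mathcal{V}(x_{2R},v_{2R}) \le (1-chR)\,\mathbb{E}\mathcal{V}(x_{0},v_{0}) + C(\mu,L,\gamma)\big[h^{5}R^{4}\sigma_*^{2}+\|J_{v}\|_{L^{2}}^{2}\big],
\end{equation*}
and then to iterate it over $k$ blocks. Every block is treated identically: for both RR and SMS the bound depends on the batch ordering only through $\|J_{v}\|_{L^{2}}^{2}$, which has the same law in each block, and a block's batches are independent of its starting state $(x_{0},v_{0})$; hence iterating gives $\mathbb{E}\mathcal{V}(x_{2kR},v_{2kR})\le(1-chR)^{k}\mathcal{V}(x_{0},v_{0})+C(\mu,L,\gamma)[h^{5}R^{4}\sigma_*^{2}+\|J_{v}\|_{L^{2}}^{2}]\sum_{j=0}^{k-1}(1-chR)^{j}$, and bounding the geometric sum by $1/(chR)$ yields the statement.

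For the block estimate, one unrolls \cref{eq:Polyak} over $j=0,\dots,2R-1$ and splits each stochastic gradient as $\nabla f_{\omega_{j}}(x_{j})=\nabla F(x_{j})+\delta_{j}+\nabla f_{\omega_{j}}(X_*)$, where $\delta_{j}:=[\nabla f_{\omega_{j}}-\nabla F](x_{j})-[\nabla f_{\omega_{j}}-\nabla F](X_*)$ obeys $\|\delta_{j}\|\le 2L\|x_{j}-X_*\|$. The exact per-epoch identity $\sum_{j\in\text{epoch}}\nabla f_{\omega_{j}}(X_*)=R\,\nabla F(X_*)=0$ (the $R$ batches of an epoch partition $\{1,\dots,N\}$, for RR and SMS alike) shows that the accumulated contribution of the $\nabla f_{\omega_{j}}(X_*)$ terms to $v_{2R}$ is exactly $-J_{v}$ and to $x_{2R}-X_*$ is exactly $-\tfrac{h\eta}{1-\eta}J_{v}$ (in the $x$-recursion $\nabla f_{\omega_{j}}(x_{j})$ is weighted by $-h^{2}\tfrac{1-\eta^{2R-j}}{1-\eta}=-h^{2}\bigl(\tfrac{1}{1-\eta}-\tfrac{\eta}{1-\eta}\eta^{2R-1-j}\bigr)$ and the constant weight $\tfrac{1}{1-\eta}$ annihilates $\sum_{j}\nabla f_{\omega_{j}}(X_*)$). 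The same identity applied to $\nabla f_{\omega_{j}}-\nabla F$ gives $\sum_{j=0}^{2R-1}\delta_{j}=\sum_{j}\bigl([\nabla f_{\omega_{j}}-\nabla F](x_{j})-[\nabla f_{\omega_{j}}-\nabla F](x_{0})\bigr)$, so $\|\sum_{j}\delta_{j}\|_{L^{2}}\le 2L\sum_{j}\|x_{j}-x_{0}\|_{L^{2}}$, which by \cref{prop:aprioribounds} is $\lesssim L\,R\,\bigl(hR\|v_{0}\|_{L^{2}}+h^{2}R^{2}L\|x_{0}-X_*\|_{L^{2}}+h^{2}R^{3/2}\sigma_*\bigr)$. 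Thus, up to the two $J_{v}$-terms, the block's update equals that of deterministic Heavy Ball perturbed by a ``Lipschitz remainder'' whose $L^{2}$-norm is controlled by this drift bound together with the companion estimate \cref{eq:UsefulBound1} on $\|x_{j}-X_*\|_{L^{2}}$.

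Next one applies the discrete Lyapunov descent for deterministic Heavy Ball --- the computation underlying \cite{Mattingly2002} and its successors, using $\mu$-strong convexity and $L$-smoothness of $F$ together with the calibration $\gamma_{h}=(1-\eta)/(h\eta)$: a single full-gradient step contracts $\mathcal{V}$ at rate $1-c'h$ with $c'=\tfrac14\min\{\gamma,\mu\gamma/\gamma_{h}^{2}\}$, so over $2R$ steps the pure deterministic evolution of $(x_{0},v_{0})$ satisfies $\mathcal{V}\le(1-c'h)^{2R}\mathcal{V}(x_{0},v_{0})\le e^{-2c'hR}\mathcal{V}(x_{0},v_{0})$, with $c'hR\le\tfrac12$ under $h<\min\{1/(2R\gamma),1/(2R\sqrt L)\}$. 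One then expands $\mathcal{V}$ at $(x_{2R},v_{2R})$ around this pure deterministic point: since $\mathcal{V}$ is two-sidedly equivalent (\cref{lemma:equiv}) to $\|v\|^{2}+\gamma_{h}^{2}\|x-X_*\|^{2}$ and has a Lipschitz gradient (with constant controlled by $L$ and $\gamma_{h}$), $\mathcal{V}(x_{2R},v_{2R})\le\mathcal{V}(\text{pure det})+\langle\nabla\mathcal{V}(\text{pure det}),\,\text{remainder}+J_{v}\text{-part}\rangle+C(\mu,L,\gamma)\bigl(\|\text{remainder}\|_{L^{2}}^{2}+\|J_{v}\|_{L^{2}}^{2}\bigr)$. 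Taking expectations, the linear term pairing the (past-measurable) pure-deterministic point with the $J_{v}$-part has zero mean, because $\mathbb{E}[\nabla f_{\omega_{j}}(X_*)]=\nabla F(X_*)=0$ marginally for both RR and SMS (so $\mathbb{E}J_{v}=0$ and it is uncorrelated with the pure-deterministic point conditionally on earlier blocks); the remaining linear terms (the Lipschitz remainder against the pure-deterministic point, and cross terms of remainder with $J_{v}$) are handled by Young's inequality with a parameter of order $hR$. The $\|x_{0}-X_*\|^{2}$- and $\|v_{0}\|^{2}$-components of $\|\text{remainder}\|_{L^{2}}^{2}$, and the $\varepsilon\,\mathcal{V}$ term produced by Young, are each $\mathcal{O}((hR)^{2}\mathcal{V}(x_{0},v_{0}))$ or smaller by \cref{lemma:equiv} and are absorbed into the contraction, turning $c'$ into $c=c'-C(\mu,L,\gamma)hR$; the $\sigma_*$-component, of size $\mathcal{O}(h^{6}R^{5}L^{2}\sigma_*^{2})$, becomes $\mathcal{O}(h^{5}R^{4}L^{2}\sigma_*^{2})$ after division by the Young parameter $\sim hR$; and both $J_{v}$-contributions --- the second carrying the bounded factor $\tfrac{h\eta}{1-\eta}\le 1/\gamma$ --- enter as $C(\mu,L,\gamma)\|J_{v}\|_{L^{2}}^{2}$. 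Collecting these terms gives the block estimate.

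The main obstacle is the without-replacement correlation intrinsic to RR and SMS: within an epoch the batch at iteration $j$ depends on those at iterations $<j$, so one cannot eliminate the stochastic-gradient error by a step-by-step martingale/conditional-expectation argument. This is what forces the block-level analysis above, and is why the leftover noise must be quarantined into the single quantity $J_{v}$ (to be bounded strategy-by-strategy in \cref{lem:randomization}) rather than handled inline. The subsidiary difficulty is the careful accounting needed to verify that every coupling term generated when expanding $\mathcal{V}$ around the deterministic skeleton is either mean-zero or $\mathcal{O}(hR)$-small relative to $\mathcal{V}(x_{0},v_{0})$, so that the contraction rate degrades only to $c=c'-C(\mu,L,\gamma)hR$; the stepsize restrictions $h<\min\{1/(2R\gamma),1/(2R\sqrt L)\}$ are precisely what make the relevant drift-sized and $hR$-sized quantities subdominant for this to work.
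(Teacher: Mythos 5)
Your proposal is correct in spirit and shares the paper's high-level structure: a one-block ($2R$-step) descent estimate for $\mathcal{V}$, isolation of the leftover noise into $J_v$ via the partition identity $\sum_{j}\nabla f_{\omega_j}(X_*)=0$, the drift bounds of \cref{prop:aprioribounds}, Young/Peter-Paul inequalities, conditioning on the block's starting point, and then iteration with a geometric sum. However, the intra-block accounting takes a genuinely different route. The paper does \emph{not} contract along a deterministic skeleton: it unrolls the Heavy Ball updates into the explicit forms $v_{2R}=\eta^{2R}v_0-D_v$, $x_{2R}=x_0+\gamma_h^{-1}(1-\eta^{2R})v_0-D_x$, applies $L$-smoothness once (Nesterov's descent inequality at $x_0$) to $F(x_{2R})$, splits $D_v=\tfrac{1-\eta^{2R}}{1-\eta}\nabla F(x_0)+(\text{drift})+J_v$, uses $\mu$-strong convexity of $F$ on $\langle x_0-X_*,\nabla F(x_0)\rangle$ to extract the contraction $-\tfrac{1-\eta^{2R}}{2}\min\{1,\mu/\gamma_h^2\}\mathcal{V}(x_0,v_0)$, and then bounds seven explicit error terms. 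Your route --- contracting $\mathcal{V}$ along the \emph{deterministic} Heavy Ball trajectory $\Psi_{\mathrm{det}}^{2R}(x_0,v_0)$ at rate $(1-c'h)^{2R}$ and expanding $\mathcal{V}$ around that point --- requires an additional Gr\"onwall-type step that you gloss over: the remainder at step $j$ involves $\nabla f_{\omega_j}(x_j)-\nabla F(x_j^{\mathrm{det}})$, so you need to control $\|x_j-x_j^{\mathrm{det}}\|$ as well as $\|x_j-x_0\|$; this is doable with the same tools but is extra work the paper avoids by centering everything at $x_0$. Also a minor sign slip: the contribution of $\nabla f_{\omega_j}(X_*)$ to $x_{2R}-X_*$ is $+\gamma_h^{-1}J_v=+\tfrac{h\eta}{1-\eta}J_v$, not $-\tfrac{h\eta}{1-\eta}J_v$ (one substitutes $v_{2R}$'s $-J_v$ into $x_{2R}=-\gamma_h^{-1}v_{2R}+\dots$); this is immaterial since only $\|J_v\|_{L^2}$ enters. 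What each approach buys: the paper's direct expansion is entirely self-contained and verifiable term-by-term; your skeleton-plus-perturbation view is more conceptual and makes transparent why the noise cost appears at quadratic order in the perturbation, but rests on a deterministic per-step contraction lemma that the paper never states and that you would have to supply.
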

\begin{proof}
We consider the Lyapunov function $\mathcal{V}:\mathbb{R}^{d} \times \mathbb{R}^{d} \to \mathbb{R}$, defined for $(x,v)\in \mathbb{R}^{2d}$ by \eqref{eq:Lyapunov}.
Using \cite[Lemma 1.2.3]{NesterovBook} we have that
\[
F(x_{2R}) - F(X_{*}) \leq F(x_{0}) - F(X_{*}) + \langle\nabla F(x_{0}), x_{2R}-x_{0}\rangle + \frac{L}{2}\|x_{2R}-x_{0}\|^{2},
\]
and thus
\begin{align}
    \mathcal{V}(x_{2R},v_{2R}) &= F(x_{2R}) - F(X_{*}) + \frac{\gamma^{2}_{h}}{4}\|x_{2R}-X_{*}\|^{2} + \frac{\gamma_{h}}{2}\left\langle x_{2R}-X_{*},v_{2R}\right\rangle + \frac{1}{2}\|v_{2R}\|^{2}\nonumber\\
    &\leq F(x_{0}) - F(X_{*}) + \langle\nabla F(x_{0}), x_{2R}-x_{0}\rangle + \frac{L}{2}\|x_{2R}-x_{0}\|^{2} + \frac{\gamma^{2}_{h}}{4}\|x_{2R}-X_{*}\|^{2}\nonumber \\
    &+ \frac{\gamma_{h}}{2}\left\langle x_{2R}-X_{*},v_{2R}\right\rangle + \frac{1}{2}\|v_{2R}\|^{2}\label{eq:LLyapunovBound1}.
\end{align}
Then for iterates generated via the Heavy Ball method and setting $\gamma_{h} = \frac{1-\eta}{h\eta}$, one has
\begin{align*}
    &v_{2R} = \eta^{2R}v_{0} - \underbrace{h\sum^{2R-1}_{i=0}\eta^{2R-1-i}\nabla f_{\omega_{i}}(x_{i})}_{:= D_{v}}.
\end{align*}
Similarly, one may derive that $x_{k+1}+\gamma_h^{-1}v_{k+1}=x_{k}+\gamma_h^{-1}v_{k}-(h^2+h\gamma_h^{-1})\nabla f_{\omega_k}(x_k)$ and so then
\begin{align*}
    &x_{2R} = -\gamma^{-1}_{h}v_{2R}+ x_{0} + \gamma^{-1}_{h}v_{0} - (h^2+h\gamma^{-1}_{h})\sum_{k=0}^{2R-1}\nabla f_{\omega_k}(x_k)\\
    &= x_{0} + \gamma^{-1}_{h}(1 - \eta^{2R})v_{0} - \underbrace{\left[h^{2}\sum^{2R-1}_{i=0} \nabla f_{\omega_{i}}(x_{i}) + h\gamma^{-1}_{h}\sum^{2R-1}_{i=0}(1-\eta^{2R-1-i})\nabla f_{\omega_{i}}(x_{i})\right]}_{:= D_{x}}.
\end{align*}

Substituting these expressions for $x_{2R},v_{2R}$ into \cref{eq:LLyapunovBound1}, we arrive at the following bound on the expectation of  $\mathcal{V}(x_{2R},v_{2R})$, where the first order terms (in terms of the stepsize $h$) are given in the same lines as $\mathcal{V}(x_{0},v_{0})$ (we suppress the $\mathbb{E}$ notation, but it is implicit in every expression)
\begin{align*}
    &\mathcal{V}(x_{2R},v_{2R}) \leq \mathcal{V}(x_{0},v_{0}) + {\langle\nabla F(x_{0}), \gamma^{-1}_{h}(1-\eta^{2R})v_{0}\rangle - \frac{\gamma_{h}}{2}\left\langle x_{0}-X_{*}, D_{v}\right\rangle - \frac{1}{2}(1-\eta^{2R})\|v_{0}\|^{2} -\langle v_{0}, D_{v}}\rangle\\
    &- \langle\nabla F(x_{0}), D_{x}\rangle + \left(\frac{\gamma^{2}_{h}}{4} + \frac{L}{2}\right)\|\gamma^{-1}_{h}(1-\eta^{2R})v_{0}-D_{x}\|^{2} - \frac{\gamma^{2}_{h}}{2}\left\langle x_{0}-X_{*},D_{x}\right\rangle\\
        & - \frac{\gamma_{h}}{2}\left\langle D_{x}, \eta^{2R}v_{0} - D_{v}\right\rangle  +\frac{1}{2}{(1-\eta^{2R})\langle v_{0}, D_{v}}\rangle + \frac{1}{2}\|D_{v}\|^{2}.
\end{align*}
We may rewrite $h^{-1}D_{v} = \sum^{2R-1}_{i=0}\eta^{2R-1-i}\nabla f_{\omega_{i}}(x_{i}) = \sum^{2R-1}_{i=0}\eta^{2R-1-i}\nabla f_{\omega_{i}}(x_{0}) + \sum^{2R-1}_{i=0}\eta^{2R-1-i}(\nabla f_{\omega_{i}}(x_{i}) - \nabla f_{\omega_{i}}(x_{0}))$ (which is at least second order in the stepsize) and then use that $\mathbb{E}_{x_0}\left[\sum^{2R-1}_{i=0}\eta^{2R-1-i}\nabla f_{\omega_{i}}(x_{0})\right]=\frac{(1-\eta^{2R})}{1-\eta}\nabla F(x_0)$ to rewrite the above bound as (recall we suppress the $\mathbb{E}$ symbol)
\begin{align*}
    \mathcal{V}(x_{2R},v_{2R})&\leq \mathcal{V}(x_{0},v_{0})  { - \frac{(1-\eta^{2R})}{2\eta}\left\langle x_{0}-X_{*}, \nabla F(x_{0})\right\rangle - \frac{1}{2}(1-\eta^{2R})\|v_{0}\|^{2}}\\
    &{- \frac{\gamma_{h}}{2}h\left\langle x_{0}-X_{*},\sum^{2R-1}_{i=0}\eta^{2R-1-i}(\nabla f_{\omega_{i}}(x_{i}) - \nabla f_{\omega_{i}}(x_{0}))\right\rangle} \\
    &-h(1-\eta^{2R})\langle\nabla F(x_{0}) , v_{0}\rangle- h \langle v_{0} , \sum^{2R-1}_{i=0}\eta^{2R-1-i}(\nabla f_{\omega_{i}}(x_{i}) - \nabla f_{\omega_{i}}(x_{0}))\rangle \\
    &- \langle\nabla F(x_{0}), D_{x}\rangle + \left(\frac{\gamma^{2}_{h}}{4} + \frac{L}{2}\right)\|\gamma^{-1}_{h}(1-\eta^{2R})v_{0}-D_{x}\|^{2} - \frac{\gamma^{2}_{h}}{2}\left\langle x_{0}-X_{*},D_{x}\right\rangle\\
    & - \frac{\gamma_{h}}{2}\left\langle D_{x}, \eta^{2R}v_{0} - D_{v}\right\rangle +\frac{1}{2} (1-\eta^{2R})\langle v_{0}, D_{v} \rangle+ \frac{1}{2}\|D_{v}\|^{2}.
\end{align*}
We then use strong convexity of $F$ and the fact that $1/\eta>1$ to write
\begin{align*}
    \mathcal{V}(x_{2R},v_{2R})&\leq \mathcal{V}(x_{0},v_{0})  - \frac{(1-\eta^{2R})}{2}\left(\frac{\mu}{2}\|x_{0} -X_{*}\|^{2} + F(x_{0}) - F(X_{*}) + \|v_{0}\|^{2}\right)\\
    &\underbrace{- \frac{\gamma_{h}}{2}h\left\langle x_{0}-X_{*} + 2\gamma^{-1}_{h}v_{0},\sum^{2R-1}_{i=0}\eta^{2R-1-i}(\nabla f_{\omega_{i}}(x_{i}) - \nabla f_{\omega_{i}}(x_{0}))\right\rangle}_{(I)}\\
    &+\underbrace{h(1-\eta^{2R})\langle\nabla F(x_{0}) , v_{0}\rangle}_{(II)} + \underbrace{\left(\frac{\gamma^{2}_{h}}{2} + L\right)\left(\|\gamma^{-1}_{h}(1-\eta^{2R})v_{0}\|^{2}+{\|D_{x}\|^{2}}\right)}_{(III)} \\
    &\underbrace{- \frac{\gamma^{2}_{h}}{2}\left\langle x_{0}-X_{*} + 2\gamma^{-2}_{h}\nabla F(x_{0}) +\gamma^{-1}_{h}{\eta^{2R}}v_{0},D_{x}\right\rangle}_{(IV)}\\
    & +\underbrace{{\frac{\gamma_{h}}{2}\left\langle D_{x}, D_{v}\right\rangle}}_{(V)} + \underbrace{\frac{1}{2}(1-\eta^{2R})\langle v_{0}, D_{v}\rangle}_{(VI)} + \underbrace{\frac{1}{2}{\|D_{v}\|^{2}}}_{(VII)}.
\end{align*}
We can bound each term separately in expectation. Firstly, via the Cauchy-Schwartz inequality then the Peter-Paul inequality
\begin{align*}
    &\mathbb{E}[(I)] \leq \frac{1}{32}hr\left(\gamma^{2}_{h}\|x_{0}-X_{*}\|^{2} + 2\|v_{0}\|^{2}\right) + \frac{32h}{r}\|\sum^{R-1}_{i=0}\eta^{R-1-i}(\nabla f_{\omega_{R-1-i}}(x_{R+i}) - \nabla f_{\omega_{R-1-i}}(x_{0})) \\
    &+ \eta^{R+i}(\nabla f_{\omega_{R-1-i}}(x_{R-1-i})-\nabla f_{\omega_{R-1-i}}(x_{0}))\|^{2}\\
    &\leq \frac{1}{2}hr\mathcal{V}(x_{0},v_{0}) +\frac{C(hR)^{3}}{r/R}L \left((1 + hR L/\gamma_{h})^{2}\mathcal{V}(x_{0},v_{0}) + h^{2}R\sigma_*^{2}\right),
\end{align*}
where we finish by applying \cref{prop:aprioribounds} and the first bound in \cref{lemma:equiv}. By the same bound one has for $(II)$ and then $(III)$
\begin{align*}
    (II) &\leq \frac{\sqrt{L}}{2}h(1-\eta^{2R})\|v_{0}\|^{2} + \frac{1}{2\sqrt{L}}h(1-\eta^{2R})\|\nabla F(x_{0})\|^{2}\\
    &\leq 6h(1-\eta^{2R})\max{\left\{\frac{L^{3/2}}{\gamma^{2}_{h}},\sqrt{L}\right\}}\mathcal{V}(x_{0},v_{0})\\
    (III) &\leq \left(\frac{\gamma^{2}_{h}}{2} + L\right)\left(8\gamma^{-2}_{h}(1-\eta^{2R})^{2}\mathcal{V}(x_{0},v_{0})+\|D_{x}\|^{2}\right),
\end{align*}
where we apply the Peter-Paul inequality for $(II)$ and the triangle inequality for $(III)$.
For $(IV)$ we first take the expectation inside the inner product (conditional on $(x_0,v_0)$) and then apply the Peter-Paul inequality (we also use that $\eta^{4R}\leq e^{-2}$).
\begin{align*}
    (IV) &\leq \frac{\gamma^{4}_{h}(hR)^{2}}{4}\|x_{0}-X_{*} + 2\gamma^{-2}_{h}\nabla F(x_{0}) +\gamma^{-1}_{h}\eta^{2R}v_{0}\|^{2} + \frac{\|\mathbb{E}[D_{x}]\|^{2}}{2(hR)^{2}}\\
    &\leq 6(hR)^{2}\gamma^{2}_{h}(1+ 4\gamma^{-4}_{h}L^{2})\mathcal{V}(x_{0},v_{0})+ \frac{\|\mathbb{E}[D_{x}]\|^{2}}{2(hR)^{2}}.
\end{align*}
The remaining bounds for $(V)$ and $(VI)$ are obtained rather immediately,
\begin{align*}
    (V) &\leq \frac{\gamma^{2}_{h}}{4}\|D_{x}\|^{2} + \frac{1}{2}\|D_{v}\|^{2}\\
    (VI) &\leq \frac{1}{4}(1-\eta^{2R})^{2} \|v_{0}\|^{2} + \frac{1}{4}\|D_{v}\|^{2} \leq 2(1-\eta^{2R})^{2}\mathcal{V}(x_{0},v_{0})+ \frac{1}{4}\|D_{v}\|^{2}.
\end{align*}
Now we wish to bound the stochastic gradient terms $D_v$ and $D_x$. Firstly we have that 
\begin{align*}
    D_{v} = h\sum^{2R-1}_{i=0}\eta^{2R-1-i}\left(\nabla f_{\omega_{i}}(x_{i}) - \nabla f_{\omega_{i}}(X_{*})\right) + h\sum^{2R-1}_{i=0}\eta^{2R-1-i}\nabla f_{\omega_{i}}(X_{*}),
\end{align*}
which may thus be bounded in $L^{2}$ using \cref{eq:UsefulBound1} as
\begin{align*}
    \|D_v\|_{L^2}&\leq C\left(hRL\left((\gamma^{-1}_{h} + hR)\mathcal{V}^{1/2}(x_{0},v_{0})  + h^{2}R\sqrt{R}\sigma_* \right) + \bigg\|\underbrace{h\sum^{2R-1}_{i=0}\eta^{2R-1-i}\nabla f_{\omega_{i}}(X_{*})}_{:= J_{v}}\bigg\|_{L^{2}}\right).
\end{align*}
Further considering $D_{x}$ we have
\begin{align*}
    D_{x} = \sum^{2R-1}_{i=0} (h^{2} + h\gamma^{-1}_{h}(1-\eta^{2R-1-i}))\left(\nabla f_{\omega_{i}}(x_{i}) - \nabla f_{\omega_{i}}(X_{*})\right) + \sum^{2R-1}_{i=0} (h^{2} + h\gamma^{-1}_{h}(1-\eta^{2R-1-i}))\nabla f_{\omega_{i}}(X_{*}),
\end{align*}
with the final term vanishing in expectation. Then as with $D_{v}$ we can bound $D_{x}$ in $L^{2}$ by
\begin{align*}
    &\leq C(h^{2} + h\gamma^{-1}_{h}(1-\eta^{2R-1}))RL\left((\gamma^{-1}_{h} + hR)\mathcal{V}^{1/2}(x_{0},v_{0})  + h^{2}R\sqrt{R}\sigma_* \right) \\
    &+ \bigg\|\underbrace{\sum^{2R-1}_{i=0} (h^{2} + h\gamma^{-1}_{h}(1-\eta^{2R-1-i}))\nabla f_{\omega_{i}}(X_{*})}_{:= J_{x} = \gamma^{-1}_{h}J_{v} \textnormal{ (using that }\sum^{2R-1}_{i=0}\nabla f_{\omega_{i}}(X_{*}) = 0\textnormal{)}}\bigg\|_{L^{2}}.
\end{align*}
Combining terms (where we use the second bound in \cref{lemma:equiv}) and considering $r = \frac{\gamma R\min\left\{1,\mu/\gamma^{2}_{h}\right\}}{2}$, we use that $h<\frac{1}{2\gamma R}$ and $-(1-\eta^{2R})\leq -R\gamma h$ to obtain 
\begin{align*}
    &\mathcal{V}(x_{2R},v_{2R}) \leq \mathcal{V}(x_{0},v_{0})  - \frac{(1-\eta^{2R})}{2}\min\left\{1,\mu/\gamma^{2}_{h}\right\}\mathcal{V}(x_{0},v_{0})\\
    &+ \left[\frac{h}{2}r + C(\mu,L,\gamma)(hR)^{2}\right]\mathcal{V}(x_{0},v_{0}) + C(\mu,L,\gamma)\left[h^{5}R^{4}\sigma_*^{2} + \|J_{v}\|^{2}_{L^{2}}\right]\\
    &\leq \underbrace{\left(1-\frac{h R\min\left\{\gamma,\mu\gamma/\gamma^{2}_{h}\right\}}{4} + C(\mu,L,\gamma)(hR)^{2}\right)}_{:= 1- chR}\mathcal{V}(x_{0},v_{0}) + C(\mu,L,\gamma)\left[h^{5}R^{4}\sigma_*^{2} + \|J_{v}\|^{2}_{L^{2}}\right],
\end{align*}
We end up with 
\begin{align*}
    \mathcal{V}(x_{2kR},v_{2kR}) \leq (1-chR)^{k}\mathcal{V}(x_{0},v_{0}) + \frac{C(\mu,L,\gamma)\left[h^{5}R^{4}\sigma_*^{2} + \|J_{v}\|^{2}_{L^{2}}\right]}{chR},
\end{align*}
as required.
\end{proof}

\begin{lemma}\label{lem:randomization}
For the RR randomisation strategy we have
\begin{align*}
    \left\|h\sum^{2R-1}_{i=0}\eta^{2R-1-i}\nabla f_{\omega_{i}}(X_{*})\right\|_{L^{2}} &\leq 4h^{2}\gamma R^{3/2}\sigma_*.
\end{align*}
For the SMS randomisation strategy we have
\begin{align*}
    \left\|h\sum^{2R-1}_{i=0}\eta^{2R-1-i}\nabla f_{\omega_{i}}(X_{*})\right\|_{L^{2}} &\leq 2h^{3}\gamma^{2}R^{5/2}\sigma_*.
\end{align*}
\end{lemma}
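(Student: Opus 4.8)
The plan is to reduce both inequalities to the partial-sum estimate of \cite[Lemma 1]{Mishchenko2020} by a single summation-by-parts against the geometric weights, with the extra power of $h$ for SMS coming from the reversal symmetry of the two epochs. Write $g_{i} := \nabla f_{\omega_{i}}(X_{*})$ and $S_{j} := \sum_{i=0}^{j-1} g_{i}$ (so $S_{0}=0$; recall $\eta = e^{-\gamma h}$). Two structural facts are used throughout: first, since within any single epoch the batches partition $\{1,\dots,N\}$ and $\nabla F(X_{*})=0$, one has $\sum_{i\in\text{epoch}} g_{i} = R\,\nabla F(X_{*}) = 0$, hence $S_{R}=S_{2R}=0$ for both RR and SMS; second, for RR the two epochs form an i.i.d.\ pair of uniformly random batch-permutations, whereas for SMS the second epoch is the reversal of the first ($\omega_{i}=\omega_{2R-1-i}$), which together with $S_{R}=0$ forces $S_{i}=-S_{2R-i}$ for $R\le i\le 2R$.

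First I would carry out the summation by parts. Using $g_{i}=S_{i+1}-S_{i}$ together with $S_{0}=S_{2R}=0$, Abel summation gives
\begin{equation*}
\sum_{i=0}^{2R-1}\eta^{2R-1-i} g_{i} = \sum_{i=1}^{2R-1}\bigl(\eta^{2R-i}-\eta^{2R-1-i}\bigr)S_{i} = -(1-\eta)\sum_{i=1}^{2R-1}\eta^{2R-1-i} S_{i},
\end{equation*}
so that, with $\eta^{2R-1-i}\le 1$ and $1-\eta = 1-e^{-\gamma h}\le \gamma h$,
\begin{equation*}
\Bigl\|h\sum_{i=0}^{2R-1}\eta^{2R-1-i} g_{i}\Bigr\|_{L^{2}} \le \gamma h^{2} \sum_{i=1}^{2R-1}\|S_{i}\|_{L^{2}}.
\end{equation*}
For RR I would then bound each $\|S_{i}\|_{L^{2}}$ directly: for $1\le i\le R$, $S_{i}$ is a partial sum of the zero-mean random permutation of the first epoch, so $\|S_{i}\|_{L^{2}}\le \sqrt{i(R-i)/(R-1)}\,\sigma_{*}\le \sqrt{R}\,\sigma_{*}$ by \cite[Lemma 1]{Mishchenko2020}, and for $R\le i\le 2R$ the same holds for the partial sum $S_{i}-S_{R}$ of the independent second epoch. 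Summing the at most $2R$ terms gives $\sum_{i=1}^{2R-1}\|S_{i}\|_{L^{2}}\le 2R^{3/2}\sigma_{*}$, and hence the claimed bound $4h^{2}\gamma R^{3/2}\sigma_{*}$, with room to spare.

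For SMS the extra factor is extracted \emph{before} estimating the $\|S_{i}\|_{L^{2}}$. Splitting $\sum_{i=1}^{2R-1}\eta^{2R-1-i}S_{i}$ at $i=R$ (the $i=R$ term vanishing since $S_{R}=0$), substituting $i\mapsto 2R-i$ in the upper half, and using $\eta^{2R-1-(2R-i)}=\eta^{i-1}$ with $S_{2R-i}=-S_{i}$, the two halves recombine as
\begin{equation*}
\sum_{i=1}^{2R-1}\eta^{2R-1-i}S_{i} = \sum_{i=1}^{R-1}\bigl(\eta^{2R-1-i}-\eta^{i-1}\bigr)S_{i} = \sum_{i=1}^{R-1}\eta^{i-1}\bigl(\eta^{2R-2i}-1\bigr)S_{i}.
\end{equation*}
Since $|\eta^{2R-2i}-1| = 1-e^{-\gamma h(2R-2i)}\le 2\gamma h R$ and $\sum_{i=1}^{R-1}\|S_{i}\|_{L^{2}}\le R^{3/2}\sigma_{*}$ by the same partial-sum bound, we obtain $\bigl\|\sum_{i=1}^{2R-1}\eta^{2R-1-i}S_{i}\bigr\|_{L^{2}}\le 2\gamma h R^{5/2}\sigma_{*}$, and combining with the factor $\gamma h^{2}$ from the summation-by-parts step gives $2h^{3}\gamma^{2}R^{5/2}\sigma_{*}$.

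The main obstacle is purely the bookkeeping in the SMS step: one must check that the reversal identity $S_{i}=-S_{2R-i}$ is invoked on the correct index range and that the geometric weights pair up exactly as $\eta^{2R-1-i}\leftrightarrow\eta^{i-1}$, since it is precisely this cancellation --- absent for RR --- that converts one power of $h$ in the leading $(1-\eta)$ factor into the additional $\gamma h R$. Everything else is the cited partial-sum bound of \cite{Mishchenko2020} together with the elementary inequality $1-e^{-x}\le x$ for $x\ge 0$.
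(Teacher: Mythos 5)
Your proof is correct, and it takes a genuinely different route from the paper's. The paper proves the RR bound by subtracting the constant $1$ from the weights $\eta^{2R-1-i}$ (allowed because the unweighted sum of $\nabla f_{\omega_i}(X_*)$ over each epoch vanishes), so that each coefficient satisfies $|\eta^{2R-1-i}-1|\leq 2\gamma hR$, and then invokes the \emph{weighted} random-permutation estimate of \cite[Lemma 10]{paulin2024sampling} to control $\|\sum_i a_i \nabla f_{\omega_i}(X_*)\|_{L^2}$ with index-dependent $a_i$. For SMS the paper first folds the two epochs onto one via $\omega_i=\omega_{2R-1-i}$, then subtracts $2\eta^{(2R-1)/2}$ so the coefficient becomes the perfect square $\eta^{R-i-1}(1-\eta^{i+1/2})^2$; the factor $(1-\eta^{i+1/2})^2\leq(\gamma hR)^2$ is what supplies the extra power of $h$, and the same weighted lemma finishes. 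You instead sum by parts against the geometric weights to pull out the explicit $(1-\eta)\leq\gamma h$ factor and reduce to the partial sums $S_i$, which lets you cite only the \emph{unweighted} estimate of \cite[Lemma 1]{Mishchenko2020}; for SMS the reversal identity $S_i=-S_{2R-i}$ replaces the paper's perfect-square observation and supplies the second factor $\leq 2\gamma hR$. Both mechanisms exploit the same two facts (zero epoch-sum at $X_*$, and for SMS the mirror symmetry of the second epoch), but yours avoids the weighted lemma of Paulin et al.\ at the cost of an extra triangle inequality $\|\sum a_i S_i\|_{L^2}\leq\sum|a_i|\|S_i\|_{L^2}$, which is lossy in general but does not degrade the claimed $R$-dependence here; in fact you land on the slightly sharper constant $2\gamma h^2 R^{3/2}\sigma_*$ for RR and exactly $2\gamma^2 h^3 R^{5/2}\sigma_*$ for SMS.
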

\begin{proof}
Firstly, for the RR randomisation strategy we bound (using that $(\eta^{2R}-1)^2=(e^{-2R\gamma h}-1)^2\leq (2R\gamma h)^2$)
\begin{align*}
    \left\|h \sum^{2R-1}_{i=0}\eta^{2R-1-i}\nabla f_{\omega_{i}}(X_{*})\right\|_{L^{2}} &=  \left\|h \sum^{2R-1}_{i=0}(\eta^{2R-1-i}-1)\nabla f_{\omega_{i}}(X_{*})\right\|_{L^{2}}\\
    &\leq 2h\sqrt{\sum^{2R-1}_{i=0}(\eta^{2R-1-i}-1)^{2}\|\nabla f_{\omega_{i}}(X_{*})\|^{2}_{L^{2}}}\\
    &\leq 4h^{2}\gamma R\sqrt{R} \sigma_*,
\end{align*}
where we have used \cite[Lemma 10]{paulin2024sampling} (since $\eta^{2R-1-i}$ introduces a dependence on the index $i$ which prevents the use of e.g. \cite[Lemma 1]{Mishchenko2020}). For the SMS randomisation strategy we bound 
\begin{align*}
    \left\|h \sum^{2R-1}_{i=0}\eta^{2R-1-i}\nabla f_{\omega_{i}}(X_{*})\right\|_{L^{2}} &=  h\left\|\sum^{R-1}_{i=0}(\eta^{R-i-1}-2\eta^{(2R-1)/2} + \eta^{R+i} )\nabla f_{\omega_{R-i-1}}(X_{*})\right\|_{L^{2}}\\
    &= h\left\|\sum^{R-1}_{i=0}\eta^{R-i-1}(1-\eta^{i+1/2})^{2}\nabla f_{\omega_{R-i-1}}(X_{*})\right\|_{L^{2}}\\
    &\leq 2h\sqrt{\sum^{R-1}_{i=0}\eta^{2(R-i-1)}(1-\eta^{i+1/2})^{4}\|\nabla f_{\omega_{R-i-1}}(X_{*})\|^{2}_{L^{2}}}\\
    &\leq 2h(h\gamma R)^{2}\sqrt{R}\sigma_*,
\end{align*}
as required.
\end{proof}
\end{document}